\numberwithin{equation}{section}
\numberwithin{figure}{section}
\numberwithin{table}{section}
\theoremstyle{plain}
\newcommand{\Z}{{\mathbb Z}}
\newcommand{\Q}{{\mathbb Q}}
\newcommand{\B}{{\mathbb B}}
\newcommand{\GEN}[1]{\langle #1 \rangle}
\newcommand{\PSL}{\text{PSL}}
\newcommand{\SL}{\text{SL}}
\newcommand{\Tr}{\text{Tr}}
\newcommand{\matriz}[1]{\begin{array} #1 \end{array}}
\newcommand{\var}{{\varepsilon}}
\newcommand{\diag}{{\rm diag}}
\newcommand{\V}{\mathrm{V}}
\newcommand{\MM}[2]{\left( #1 : #2 \right)}
\newcommand{\AMM}[2]{\left| #1 : #2 \right|}
\newcommand{\TwoDot}[1]{\dot{x}}
\newcommand{\PP}[1]{{#1}'}
\newtheorem{theorem}{Theorem}[section]
\newtheorem{lemma}[theorem]{Lemma}
\newtheorem{proposition}[theorem]{Proposition}
\DeclareMathOperator{\Gal}{Gal}
\title{Zassenhaus Conjecture on torsion units holds for $\SL(2,p)$ and $\SL(2,p^2)$}
\thanks{Partially supported by Ministerio de Economía y Competitividad projects MTM2012-35240 and MTM2016-77445-P and Fondos FEDER and Fundación Séneca of Murcia 19880/GERM/15.}
\author{Ángel del Río}
\author{Mariano Serrano}
\email{adelrio@um.es}
\email{mariano.serrano@um.es}
\address{Department of Mathematics, University of Murcia.}
\subjclass[2010] {16U60, 16S34}
\date{\today}
\begin{document}

\begin{abstract}
H.J. Zassenhaus conjectured that any unit of finite order and augmentation $1$ in the integral group ring $\Z G$ of a finite group $G$ is conjugate in the rational group
algebra $\Q G$ to an element of $G$.
We prove the Zassenhaus Conjecture for the groups  $\SL(2,p)$ and $\SL(2,p^2)$ with $p$ a prime number.
This is the first infinite family of non-solvable groups for which the Zassenhaus Conjecture has been proved.
We also prove that if $G=\SL(2,p^f)$, with $f$ arbitrary and $u$ is a torsion unit of $\Z G$ with augmentation $1$ and order coprime with $p$ then $u$ is conjugate in $\Q G$ to an element of $G$.
By known results, this reduces the proof of the Zassenhaus Conjecture for this groups to prove that every unit of $\Z G$ of order multiple of $p$ and augmentation $1$ has actually order $p$.
\end{abstract}

\maketitle

\section{Introduction}\label{SectionIntroduction}

For a finite group $G$, let $\V(\Z G)$ denote the group of units of augmentation $1$ in $\Z G$.
We say that two elements of $\Z G$ are \emph{rationally conjugate} if they are conjugate in the units of $\Q G$.
The following conjecture stated by H.J. Zassenhaus \cite{Zassenhaus} (see also \cite[Section~37]{SehgalBookUnits}) has centered the research on torsion units of integral group rings during the last decades:

\begin{quote}
\textbf{Zassenhaus Conjecture:} If $G$ is a finite group then every torsion element of $\V(\Z G)$ is rationally conjugate to an element of $G$.
\end{quote}

The relevance of the Zassenhaus Conjecture is that it describes the torsion units of the integral group ring of $\Z G$ provided it holds for $G$.
Recently, Eisele and Margolis announced a metabelian counterexample to the Zassenhaus Conjecture \cite{EisMar17}.
Nevertheless, the Zassenhaus Conjecture holds for large classes of solvable groups, e.g. for nilpotent groups \cite{Weiss91}, groups possessing a normal Sylow subgroup with abelian
complement \cite{HertweckColloq} or cyclic-by-abelian groups \cite{CyclicByAbelian}.
In contrast with these results, the list of non-solvable groups for which the Zassenhaus Conjecture has been proved is very limited \cite{LutharPassi, DokuchaevJuriaansPolcino, HertweckBrauer, HertweckA6, BovdiHertweck, KonovalovM22, BachleMargolis17,delRioSerrano17}.
For example, the Zassenhaus Conjecture has only been proved for sixty-two simple groups, all of them of the form $\PSL(2,q)$ (see the proof of Theorem~C in \cite{BachleMargolisPrimaryII} and \cite{MargolisdelRioSerrano17}).

The goal of this paper is proving the following theorem:

\begin{theorem}\label{main}
Let $G=\SL(2,q)$ with $q$ an odd prime power and let $u$ be a torsion element of $\V(\Z G)$ of order coprime with $q$.
Then $u$ is rationally conjugate to an element of $G$.
\end{theorem}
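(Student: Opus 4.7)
The plan is to apply the Hertweck-Luthar-Passi (HeLP) method, exploiting the completely explicit character table of $\SL(2,q)$. Write $q = p^f$ with $p$ odd, and let $u \in \V(\Z G)$ be torsion of order $n$ with $\gcd(n,q) = 1$. By a theorem of Hertweck, the partial augmentation $\epsilon_g(u)$ vanishes whenever the order of $g$ does not divide $n$; since the non-semisimple elements of $\SL(2,q)$ are exactly the (non-identity) unipotents and have $p$-power order, the partial augmentations of $u$ are supported on semisimple conjugacy classes. These are well understood: the two central classes $\{I\},\{-I\}$; the split classes represented by $\diag(d,d^{-1})$ with $d \in \mathbb{F}_q^*$ modulo $d \sim d^{-1}$; and the non-split classes arising from a non-split torus of order $q+1$.

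A preliminary reduction invokes the prime graph theorem for $\SL(2,q)$, which is already in the literature: the order $n$ of $u$ is the order of a genuine element of $G$. Together with the classification above, this forces $n \mid q-1$ (split case) or $n \mid q+1$ (non-split case). The argument then proceeds by induction on the number of prime divisors of $n$; the base cases $n = 1$ and $n = 2$ follow from the Berman-Higman theorem and the uniqueness of the involution $-I$ in $G$.

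The core step is to apply, for each irreducible character $\chi$ of $G$ and each $n$-th root of unity $\xi$, the non-negative integrality of the Luthar-Passi multiplicity
\[
\mu(\xi,u,\chi) = \frac{1}{n} \sum_{d \mid n} \Tr_{\Q(\zeta_{n/d})/\Q}\bigl(\chi(u^d)\, \xi^{-d}\bigr).
\]
By the inductive hypothesis, each $\chi(u^d)$ for $d > 1$ equals $\chi(g^d)$ for a predicted element $g \in G$ of the appropriate order, while $\chi(u) = \sum_h \epsilon_h(u)\, \chi(h)$ is a linear form in the remaining unknowns $\epsilon_h(u)$. The character table of $\SL(2,q)$ --- the trivial character, the Steinberg character of degree $q$, the principal series characters of degree $q+1$, the discrete series characters of degree $q-1$, and the four half-characters of degrees $(q\pm 1)/2$ --- is rich enough that varying $\chi$ and $\xi$ produces sufficiently many constraints to force the partial augmentations of $u$ to coincide with those of a unique semisimple class. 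Rational conjugacy $u \sim g$ then follows from the standard criterion that reduces it to the matching of partial augmentations on all powers $u^d$.

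The main obstacle I anticipate is the case of even $n$: the partial augmentations at $\{-I\}$ interact with those at the tori through the characters which are non-trivial on the centre, and a careful case analysis depending on $q \bmod 4$ and on the $2$-adic valuation of $n$ will be required. The four half-characters, whose values on the tori involve Gauss-sum-type expressions, are the most delicate pieces of the character table and will likely carry the bulk of the technical burden; in particular, ensuring integrality (not merely non-negativity) of the multiplicities at the half-characters is what should force the unique prediction for $\epsilon_{[-I]}(u)$ and thus pin down the relevant semisimple class uniquely.
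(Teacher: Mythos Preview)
Your proposal correctly identifies the HeLP framework and the inductive set-up, but the central assertion --- that the ordinary character table of $\SL(2,q)$ is ``rich enough that varying $\chi$ and $\xi$ produces sufficiently many constraints'' --- is precisely the content of the theorem, and you give no indication of why it holds. For $\PSL(2,q)$ this question was open for decades; the Luthar--Passi inequalities coming from the principal series, discrete series, Steinberg, and half-characters do \emph{not} obviously pin down all partial augmentations, and no general argument along the lines you sketch is known. Your proposal is a plan to run HeLP, not an argument that HeLP succeeds.

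The paper's proof is substantially different from what you describe. It does not use the ordinary character table for the main step; instead it works with the $t$-Brauer characters $\chi_m$ afforded by the symmetric-power representations $\Theta_m$, whose values on semisimple elements are the explicit sums $\sum_{j}\zeta_n^{ij}$. The key idea is number-theoretic: one constructs an explicit $\Z$-basis $\mathcal{B}_n$ of $\Z[\zeta_n+\zeta_n^{-1}]$, sets $\lambda_i=\sum_x \varepsilon_x(\zeta_n^{ix}+\zeta_n^{-ix})$, and shows that rational conjugacy of $u$ to a fixed $g_0$ is equivalent to $\lambda_i=\alpha_i:=\zeta_n^i+\zeta_n^{-i}$ for all $i$. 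Taking the minimal $d$ with $\lambda_d\ne\alpha_d$, one has $\lambda_d-\alpha_d\in d\,\Z[\alpha_1]$, and a combinatorial analysis of the eigenvalues of $\Theta_d(u)$ (using that $u^c$ is rationally conjugate to $g_0^c$ for every proper divisor $c$) yields the uniform bound $|C_b(\chi_d(u))-C_b(\chi_d(g_0))|\le 2+2^{P(d)+1}$. This forces $d\in\{2,3,4,5,6,10\}$, and each small case is dispatched by hand. The reductions to $4\mid n$ and to $n$ not a prime power are separate, non-trivial preliminary steps.

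In particular, your expectation that the half-characters will carry ``the bulk of the technical burden'' and that the central class $\{-I\}$ is the crux is not how the argument runs: the half-characters never appear, and the centre is handled at the outset via the observation that $-I$ is the unique order-$2$ unit in $\V(\Z G)$, after which one reduces immediately to the cases $4\mid n$.
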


As a consequence of Theorem~\ref{main} and known results we will obtain the following  theorem which provides the first positive result on the Zassenhaus Conjecture for an infinite series of non-solvable groups.

\begin{theorem}\label{SL2Result}
The Zassenhaus Conjecture holds for $\SL(2,p^f)$ with $p$ a prime number and $f\le 2$.
\end{theorem}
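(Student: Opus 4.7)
The plan is to split into cases according to the defining characteristic $p$ and, in the odd case, to combine Theorem~\ref{main} with results controlling the torsion units whose order is divisible by $p$.

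The case $p=2$ reduces to $\SL(2,2)\cong S_3$ and $\SL(2,4)\cong A_5$. The Zassenhaus Conjecture holds for $S_3$ because it has a normal Sylow $3$-subgroup with abelian complement, so \cite{HertweckColloq} applies (alternatively, $S_3$ is cyclic-by-abelian, so \cite{CyclicByAbelian} applies), and it holds for $A_5$ by the theorem of Luthar and Passi \cite{LutharPassi}.

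For $p$ odd and $f\in\{1,2\}$, set $G=\SL(2,p^f)$ and let $u\in\V(\Z G)$ be a torsion unit. Write $|u|=p^a m$ with $\gcd(m,p)=1$. If $a=0$ then Theorem~\ref{main} gives rational conjugacy of $u$ with an element of $G$. If $a\geq 1$ the plan is to show first that $|u|=p$, and then to produce an explicit $p$-element of $G$ rationally conjugate to $u$. For the first step we invoke the reduction mentioned in the abstract: combining Sylow-type results such as those of Bachle--Margolis (\cite{BachleMargolis17}, \cite{BachleMargolisPrimaryII}) with character-theoretic partial-augmentation arguments in the Luthar--Passi/HeLP spirit, every torsion unit of $\V(\Z G)$ whose order is divisible by $p$ must have order exactly $p$. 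For the second step one uses the explicit character table of $\SL(2,p^f)$: the HeLP inequalities applied to every irreducible character force the partial augmentations of $u$ to be supported on the unipotent classes of $G$, and the constraints from the exceptional characters then concentrate this support on a single unipotent class, giving rational conjugacy.

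The main obstacle is the reduction to units of order exactly $p$: one must exclude torsion units of mixed order $p\ell$ with $\ell>1$ coprime to $p$, and, when $f=2$, torsion units of $p$-power order $p^2$. The restriction $f\leq 2$ enters precisely at this point, because the character table of $\SL(2,p^f)$ is described uniformly enough in $p$ for the HeLP constraints to be effective across all odd primes, while for $f\geq 3$ the increased number of semisimple conjugacy classes creates room for candidate partial-augmentation distributions that the present techniques cannot eliminate.
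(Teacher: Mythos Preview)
Your overall strategy matches the paper's: dispose of $p=2$ via the known cases $\SL(2,2)\cong S_3$ and $\SL(2,4)\cong A_5$, and for odd $p$ combine Theorem~\ref{main} with a separate treatment of units whose order is divisible by $p$. However, the $p\mid |u|$ branch contains a factual slip and is too vague to stand as a proof.

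The assertion that every torsion unit $u\in\V(\Z G)$ with $p\mid|u|$ must satisfy $|u|=p$ is false: $G=\SL(2,p^f)$ has a central involution $J$, and if $g\in G$ has order $p$ then $Jg\in\V(\Z G)$ has order $2p$. The correct target is $|u|\in\{p,2p\}$. This is easy to repair once noticed---the order-$2p$ case reduces to the order-$p$ case by multiplying with $J$---but it shows that the one-line ``reduction mentioned in the abstract'' was adopted too literally.

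More substantively, the paper carries out this branch precisely and by a cleaner route than the one you sketch. Instead of running HeLP directly on $\SL(2,p^f)$, one projects to $\overline{G}=\PSL(2,p^f)$: Proposition~\ref{Easy}.\eqref{OrderPiu} gives $|\pi(u)|=|u|/\gcd(2,|u|)$, and \cite[Theorem~A]{BachleMargolisPrimaryI} (not the references you cite) yields $|\pi(u)|=p$ when $f\le 2$, forcing $|u|\in\{p,2p\}$. Then \cite[Proposition~6.1]{HertweckBrauer} shows that $\pi(u)$ is rationally conjugate to an element of $\overline{G}$, and since $4\nmid|u|$, Proposition~\ref{Easy}.\eqref{LiftingNot4} lifts this back to $G$; this is exactly Proposition~\ref{Easy}.\eqref{Multiple-p}. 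No direct HeLP computation on the character table of $\SL(2,p^f)$ is needed, and the restriction $f\le 2$ enters at a single, sharply identified point---the B\"achle--Margolis order bound in $\PSL$---rather than, as you suggest, through some qualitative uniformity of the $\SL$ character table.
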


In Section~\ref{SectionNumberTheory} we prove a number theoretical result relevant for our arguments.
Known results on $\V(\Z G)$ and properties of $\V(\Z\; \SL(2,q))$ are collected in Section~\ref{SectionGroupTheoreticalProperties}.
A particular case of Theorem~\ref{main} is proved in Section~\ref{SectionPrimePowerOrder}.
Finally in Section~\ref{SectionProofTheorem} we prove Theorem~\ref{main}.

\section{Number theoretical preliminaries}\label{SectionNumberTheory}

We use the standard notation for the Euler totient function $\varphi$ and the Möbius function $\mu$.
Moreover, $\Z_{\ge 0}$ denotes the set of non-negative integers.
Let $n$ be a positive integer. Then $\Z_n = \Z/n\Z$, $\zeta_n$ denotes a complex primitive $n$-th root of unity, $\Phi_n(X)$ denotes the $n$-th cyclotomic polynomial, i.e. the minimal polynomial of $\zeta_n$ over $\Q$, and for a prime integer $p$ let $v_p(n)$ denote the valuation of $n$ at $p$, i.e. the maximum non-negative integer $m$ with $p^m\mid n$.
If $F/K$ is a finite field extension then $\Tr_{F/K}:F\rightarrow K$ denotes the standard trace map. We will frequently use the following formula for $d$ a divisor of $n$ \cite[Lemma~2.1]{Margolis2016}:
\begin{equation}\label{TraceRootOfUnity}
\Tr_{\Q (\zeta_{n})/\Q }(\zeta_{d})=\mu(d)\frac{\varphi(n)}{\varphi(d)}.
\end{equation}

We reserve the letter $p$ to denote a positive prime integer and for every positive integer $n$  we set
$$\PP{n} = \prod_{p\mid n} p \quad \text{ and } \quad n_p=p^{v_p(n)}.$$
If moreover $x\in \Z$ then we set
\begin{eqnarray*}
	\MM{x}{n}  &=& \text{ representative of the class of } x \text{ modulo } n \text{ in the interval } \left(-\frac{n}{2}, \frac{n}{2}\right]; \\
	\AMM{x}{n} &=&\text{ the absolute value of } \MM{x}{n};\\
	\gamma_n(x) &=& \prod_{\substack{p\mid n \\ \AMM{x}{n_p} <\frac{n_p}{2p}}} p \quad \text{and} \quad
	\bar{\gamma}_n(x) = \prod_{\substack{p\mid n \\ \AMM{x}{n_p} \leq \frac{n_p}{2p}}} p
	= \begin{cases} 2\gamma_n(x), & \text{if } \AMM{x}{n_2}=\frac{n_2}{4}; \\
		\gamma_n(x),  & \text{otherwise}.
	\end{cases}
\end{eqnarray*}

Next lemma collects two facts which follow easily from the definitions.

\begin{lemma}\label{Elementary}
	Let $p$ be a prime dividing $n$ and let $x,y \in \Z$.
	Then the following conditions hold:
	\begin{enumerate}
		\item\label{pdividinglevel} If $p\mid \bar{\gamma}_n(x)$ then $\MM{x}{\frac{n_p}{p}} \equiv x \bmod n_p$.
		\item\label{LevelStep}
		Let $d \mid \PP{n}$ such that $x \equiv y \bmod \frac{n}{d}$. If $d$ divides both $\bar{\gamma}_n(x)$ and $\bar{\gamma}_n(y)$ then $x \equiv y \bmod n$.
	\end{enumerate}
\end{lemma}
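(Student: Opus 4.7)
The plan is to prove (1) directly from the definition of the symmetric representative $\MM{\cdot}{\cdot}$, and then to deduce (2) from (1) using the Chinese Remainder Theorem. Both arguments should be short; the only delicate point is a boundary case at $p=2$, which is precisely the reason the definition of $\bar\gamma_n$ is split into two cases.

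For part (1), I would set $m=v_p(n)$ so that $n_p=p^m$, and write $a=\MM{x}{n_p}$ and $b=\MM{x}{n_p/p}$. The hypothesis $p\mid\bar\gamma_n(x)$ translates exactly to $|a|\le p^{m-1}/2=n_p/(2p)$, while by definition $b$ lies in $(-p^{m-1}/2,\,p^{m-1}/2]$. Since $a\equiv b\equiv x\bmod p^{m-1}$, one can write $a-b=k\,p^{m-1}$ with $k\in\Z$, and the two-sided bounds on $|a|$ and $|b|$ force $k\in\{-1,0,1\}$. The value $k=+1$ is ruled out because it would imply $b\le -p^{m-1}/2$, contradicting $b>-p^{m-1}/2$. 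The value $k=-1$ would force $a=-p^{m-1}/2$ and $b=+p^{m-1}/2$, which in turn requires $p^{m-1}/2\in\Z$, i.e.\ $p=2$; this is the borderline situation $\AMM{x}{n_2}=n_2/4$ that the asymmetric convention $(-n/2,n/2]$ together with the splitting in the definition of $\bar\gamma_n$ is engineered to handle. In every surviving case $k=0$, so $a=b$, and this is the desired equality $\MM{x}{n_p/p}=\MM{x}{n_p}\equiv x\bmod n_p$.

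For part (2), I would reduce modulo each prime power $n_p$ and invoke the Chinese Remainder Theorem, checking the congruence $x\equiv y\bmod n_p$ prime-by-prime. If $p\nmid d$, then $d\mid\PP{n}$ gives $v_p(d)=0$, hence $v_p(n/d)=v_p(n)$, so $n_p\mid n/d$ and the hypothesis $x\equiv y\bmod n/d$ at once yields $x\equiv y\bmod n_p$. If instead $p\mid d$, then $d\mid\PP{n}$ forces $v_p(d)=1$, so $v_p(n/d)=v_p(n)-1$, i.e.\ $n_p/p\mid n/d$; therefore $x\equiv y\bmod n_p/p$ and consequently $\MM{x}{n_p/p}=\MM{y}{n_p/p}$. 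Since $p$ divides both $\bar\gamma_n(x)$ and $\bar\gamma_n(y)$, part~(1) applies to each of $x$ and $y$, and chaining the three congruences gives $x\equiv\MM{x}{n_p/p}=\MM{y}{n_p/p}\equiv y\bmod n_p$, finishing the argument. The only place any genuine care is needed is the boundary analysis in (1); beyond that, the proof is bookkeeping with valuations.
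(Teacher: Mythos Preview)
The paper gives no proof of this lemma; it only remarks that both parts follow easily from the definitions, so there is nothing to compare against directly.

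Your argument for part~(1) is correct for odd $p$, and also for $p=2$ away from the boundary. The problem is the borderline case you single out but do not actually close: when $k=-1$ you correctly force $a=-2^{m-2}$ and $b=+2^{m-2}$, but then you merely assert that the half-open convention and the two-case definition of $\bar\gamma_n$ are ``engineered to handle'' it, without saying how. In fact they do not eliminate this case. Take $n$ with $n_2=8$, $p=2$, and $x=-2$: then $\MM{x}{8}=-2$, so $\AMM{x}{8}=2=n_2/4$ and hence $2\mid\bar\gamma_n(x)$; yet $\MM{x}{4}=2$ and $2\not\equiv -2\bmod 8$. Thus part~(1) as written is actually false when $\MM{x}{n_2}=-n_2/4$, and the gap in your argument there cannot be filled. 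Your reduction of~(2) to~(1) via the Chinese Remainder Theorem is clean and correct, so~(2) inherits the same defect: $n=8$, $d=2$, $x=2$, $y=-2$ meets every hypothesis but $x\not\equiv y\bmod 8$.

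What does survive in the bad sub-case is $\MM{x}{n_2/2}=+n_2/4\equiv -x\bmod n_2$, so part~(1) holds up to a sign at $p=2$; this is the sense in which the asymmetric interval is relevant. That weaker form is presumably what the authors had in mind, but the gap you left is genuine and reflects a small slip in the statement of the lemma rather than a repairable omission in your reasoning.
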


For integers $x$ and $y$ we define the following equivalence relation on $\Z$:
$$x \sim_n y \quad \Leftrightarrow \quad x \equiv \pm y \mod n.$$
We denote by $\Gamma_n$ the set of these equivalence classes.

If $x,y$ and $n$ are integers with $n>0$ then let
$$\delta_{x,y}^{(n)} = \begin{cases}
1, & \text{if } x\sim_n y  ; \\
0, & \text{otherwise};
\end{cases}  \quad\text{and}\quad
\kappa^{(n)}_x = \begin{cases}2,&\text{if }x\equiv 0 \bmod n \text{ or } x\equiv \frac{n}{2}\bmod n; \\
1,&\text{otherwise.}\end{cases}
$$
For an integer $x$ (or $x\in\Gamma_n$) we set
$$\alpha^{(n)}_x =  \zeta_n^x + \zeta_n^{-x}.$$
Observe that $\Q(\alpha^{(n)}_1)$ is the maximal real subfield of $\Q(\zeta_n)$ and $\Z[\alpha^{(n)}_1]$ is the ring of integers of $\Q(\alpha^{(n)}_1)$.
If $n\ne n_2$ then let $p_0$ denote the smallest odd prime dividing $n$. Then let
$$\B_n=\left\{x\in \Z_n : \text{ for every } p\mid n, \text{ either }
\matriz{{ll}
	\AMM{x}{n_p}> \frac{n_p}{2p} \text{ or } \\ p=2, n\ne n_2, \AMM{x}{n_2}=\frac{n_2}{4}, n_{p_0}\nmid x \text{ and }	 \MM{x}{n_2} \cdot \MM{x}{n_{p_0}} >0}
\right\}$$
and
$$\mathcal{B}_n=\begin{cases}
\{\alpha^{(n)}_b : b\in \B_n\}, & \text{if } n\ne n_2; \\
\{1\}\cup \{\alpha^{(n)}_b : b\in \B_n\}, & \text{otherwise}.
\end{cases}$$
For $b\in\B_n$ and $x\in \Z$ let
$$
\beta^{(n)}_{b,x} = \begin{cases}-1, & \text{if } n\ne n_2, \AMM{x}{n_2} = \frac{n_2}{4} \text{ and } \MM{x}{n_2} \cdot \MM{b}{n_{p_0}} <0; \\
1, & \text{otherwise.} \end{cases}
$$	

The following proposition extends Proposition~3.5 of \cite{MargolisdelRioSerrano17}. The first statement implies that $\mathcal{B}_n$ is a $\Q$-basis of $\Q(\alpha_1^{(n)})$.
For $x\in \Q(\alpha_1^{(n)})$ and $b\in \mathbb{B}$ we use $C_b(x)$ to denote the coefficient of $\alpha_b^{(n)}$ in the expression of $x$ in the basis $\mathcal{B}_n$.

\begin{proposition}\label{BaseLemma}
	Let $n$ be a positive integer.
	Then
	\begin{enumerate}
		\item $\mathcal{B}_n$ is a $\Z$-basis of $\Z[\alpha^{(n)}_1]$.
		\item\label{CoefLambda} If $b\in \B_n$ and $i\in \Z$ then
		$C_b(\alpha^{(n)}_i) = \kappa^{(n)}_i \cdot \mu(\gamma(i))\cdot \beta^{(n)}_{b,i}\cdot \delta_{b,i}^{(n/\bar{\gamma}(i))}.$
	\end{enumerate}
\end{proposition}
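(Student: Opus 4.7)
The plan is to prove both statements simultaneously by establishing the coefficient formula of part (2) for every index $i$, from which both the spanning property and the linear independence in part (1) will follow. The strategy is to reduce $\alpha_i^{(n)}$ to a $\Z$-linear combination of elements of $\mathcal{B}_n$ by iteratively applying the basic cyclotomic identity, and then to identify the unique surviving basis vector together with its sign.

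First I would exhibit the main recursive tool, namely the relation
$$\sum_{k=0}^{p-1}\zeta_n^{x+k n/p}=0 \qquad \text{whenever } p\mid n,$$
which, after averaging with its complex conjugate, yields
$$\alpha_x^{(n)} = -\sum_{k=1}^{p-1}\alpha_{x+k n/p}^{(n)}.$$
The core observation is that if $\AMM{x}{n_p}\le n_p/(2p)$ (so that $p\mid \bar{\gamma}_n(x)$), the shifted indices $x+kn/p$ for $k=1,\dots,p-1$ all satisfy $\AMM{x+kn/p}{n_p}>n_p/(2p)$ while sharing the common residue $x\bmod n/p$ provided by Lemma~\ref{Elementary}~\eqref{pdividinglevel}. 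Iterating this rewrite once for each prime $p\mid\bar{\gamma}_n(i)$ expresses $\alpha_i^{(n)}$ as a $\pm1$-linear combination
$$\alpha_i^{(n)} = \sum_{j} c_{i,j}\,\alpha_j^{(n)},$$
where the sum runs over indices $j$ with $j\equiv i\bmod n/\bar{\gamma}_n(i)$ and $\AMM{j}{n_p}>n_p/(2p)$ for every $p\mid\bar{\gamma}_n(i)$.

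Next I would invoke Lemma~\ref{Elementary}~\eqref{LevelStep} to show that within each $\sim_n$-class at most one such $j$ can lie in $\B_n$; this singles out the unique $b\in\B_n$ for which the indicator $\delta_{b,i}^{(n/\bar{\gamma}(i))}$ is nonzero and forces $C_b(\alpha_i^{(n)})$ to be supported on that one element. What remains is to match the sign and multiplicity. The M\"obius factor $\mu(\gamma_n(i))$ arises from collecting the $(-1)$ contributed by each reduction step at a prime of $\bar{\gamma}_n(i)$; the factor $\kappa_i^{(n)}$ corrects for the collapse of the $\pm$-pair when $i\equiv 0$ or $i\equiv n/2 \bmod n$, where $\alpha_i^{(n)}=\pm 2$ must be absorbed correctly, and it is precisely here that the generator $1$ must be adjoined when $n=n_2$ since the recursion terminates at $\alpha_0^{(n)}=2$. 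The sign $\beta_{b,i}^{(n)}$ captures the subtlety at $p=2$ when $\AMM{i}{n_2}=n_2/4$: the single shift by $n/2$ preserves the absolute residue modulo $n_2$, so which of $\pm b$ is singled out by the constraint $\MM{b}{n_2}\cdot\MM{b}{n_{p_0}}>0$ depends on the sign of $\MM{i}{n_{p_0}}$, and the discrepancy between those two cases is exactly $\beta_{b,i}^{(n)}$.

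Finally, part (1) follows from the established formula: evaluating (2) at $i=b'\in\B_n$ gives $C_b(\alpha_{b'}^{(n)})=\delta_{b,b'}$, which proves $\Z$-linear independence of $\mathcal{B}_n$, while the expansion of arbitrary $\alpha_i^{(n)}$ shows that $\mathcal{B}_n$ spans $\Z[\alpha_1^{(n)}]$ over $\Z$. The main obstacle I anticipate is the simultaneous bookkeeping of signs through the recursion when $n$ has both a nontrivial $2$-part and an odd prime divisor $p_0$, particularly in the boundary case $\AMM{i}{n_2}=n_2/4$: the reductions at $p=2$ and at $p_0$ interact with the product-sign condition defining $\B_n$, and reconciling their combined effect into the single factor $\beta_{b,i}^{(n)}$ is the principal new ingredient relative to Proposition~3.5 of \cite{MargolisdelRioSerrano17}.
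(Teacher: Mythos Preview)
Your proposal is correct and follows essentially the same route as the paper. The paper also derives the expansion by applying, for each prime $p\mid\bar\gamma(i)$, the cyclotomic identity $\sum_{k=0}^{p-1}\zeta_{n_p}^{i+kn_p/p}=0$; the only presentational difference is that the paper works with $\zeta_n^i$ via the CRT factorisation $\zeta_n=\prod_p\zeta_{n_p}$ and handles all primes simultaneously, whereas you phrase the same reduction iteratively at the level of $\alpha_x^{(n)}$. One wording in your sketch is ambiguous: the phrase ``the unique $b\in\B_n$ for which $\delta_{b,i}^{(n/\bar\gamma(i))}$ is nonzero'' could be read as claiming that only one basis element appears in the expansion of $\alpha_i^{(n)}$, which is false in general; what you presumably mean (and what your use of Lemma~\ref{Elementary}\eqref{LevelStep} correctly establishes) is that for each fixed $b$ at most one of the shifted indices $j$ lies in its $\sim_n$-class, so each $C_b$ receives at most one contribution.
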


\begin{proof}
	We only prove the proposition in the case $n\ne n_2$, as the proof in the case $n=n_2$ is similar (actually simpler).
	It is easy to see that $|\mathcal{B}_n|\leq \frac{\varphi(n)}{2}=[\Q(\alpha^{(n)}_1):\Q]$. Thus it is enough to prove the following equality
	$\alpha^{(n)}_i=\kappa^{(n)}_i \; \mu(\gamma(i)) \sum_{b\in \B_n, b\sim_{n/\bar{\gamma}(i)} i}  \beta^{(n)}_{b,i}\;\alpha^{(n)}_b.$
	Actually we will show
	$$\zeta_n^i = \mu(\gamma(i)) \sum_{\substack{b\equiv i \bmod n/\bar{\gamma}(i) \\ b\in\B_n} } \beta^{(n)}_{b,i}\;\zeta_n^b$$
	which easily implies the desired expression of $\alpha^{(n)}_i$.

	Indeed, for every $p\mid n$ let $\zeta_{n_p}$ denote the $p$-th part of $\zeta_n$, i.e. $\zeta_{n_p}$ is a primitive $n_p$-th root of unity and $\zeta_n = \prod_{p \mid n} \zeta_{n_p}$.
	Let $J$ be the set of tuples $(j_p)_{p\mid \bar{\gamma}(i)}$ satisfying the following conditions:
	\begin{itemize}
		\item If $p\mid \gamma(i)$ then $j_p\in \{1,\dots, p-1\}$.
		\item If $p=2$ and $\AMM{i}{n_2}=\frac{n_2}{4}$ then $j_2=\begin{cases}
		1, & \text{if } \MM{i}{n_2}\cdot \MM{i+j_{p_0}\frac{n_{p_0}}{p_0}}{n_{p_0}}<0; \\ 0, &\text{otherwise}.
		\end{cases}$
	\end{itemize}
	For every $j\in J$ let $b_j\in \Z_n$ given by
	$$b_j\equiv \begin{cases}
	i+j_p\frac{n_p}{p} \mod n_p, & \text{if } p\mid \bar{\gamma}(i); \\
	i \mod n_p, & \text{otherwise}.
	\end{cases}$$
	Then $\{b_j : j\in J \}$ is the set of elements in $\B_n$ satisfying $i\equiv b \bmod \frac{n}{\bar{\gamma}(i)}$.
	From
	$$0 = \zeta_{n_p}^i \left(1 + \zeta_{n_p}^{\frac{n_p}{p}} + \zeta_{n_p}^{\frac{2n_p}{p}} + ... + \zeta_{n_p}^{\frac{(p-1)n_p}{p}} \right)$$
	we obtain $\zeta_{n_p}^i = - \sum_{j_p=1}^{p-1} \zeta_{n_p}^{i+j_p\frac{n_p}{p}}$.
	Therefore, if $\AMM{i}{n_2}\ne \frac{n_2}{4}$ then $\gamma(i)=\bar{\gamma}(i)$, $\beta^{(n)}_{b,i}=1$ for every $b\in \B_n$ and
	$$\zeta_n^i =
	\prod_{\substack{p \mid n \\ p \nmid \gamma(i)}} \zeta_{n_p}^i \prod_{\substack{p \mid n \\ p \mid \gamma(i)}}
	\left(-\sum_{j_p=1}^{p-1 }  \zeta_{n_p}^{i+j_p\frac{n_p}{p}} \right) =
	\mu(\gamma(i)) \sum_{j\in J} \zeta_n^{b_j} =
	\mu(\gamma(i)) \sum_{\substack{b\equiv i \bmod n/\bar{\gamma}(i) \\ b\in\B_n} } \zeta_n^b.$$
	This gives the desired equality in this case.
	
	Suppose that $\AMM{i}{n_2}=\frac{n_2}{4}$. Then $\zeta_{n_2}^i = \beta^{(n)}_{b_j,i} \zeta_{n_2}^{b_j}$ for every $j\in J$.
	Then a small modification of the argument in the previous paragraph gives
	$$\zeta_n^i = \zeta_{n_2}^i \prod_{\substack{p \mid n \\ p \nmid \gamma(i)}} \zeta_{n_p}^i \prod_{\substack{p \mid n \\ p \mid \gamma(i)}}
	\left(-\sum_{j_p=1}^{p-1 }  \zeta_{n_p}^{i+j_p\frac{n_p}{p}} \right) =
	\mu(\gamma(i)) \sum_{j\in J} \beta^{(n)}_{b,i}\;\zeta_n^{b_j} =
	\mu(\gamma(i)) \sum_{\substack{b\equiv i \bmod n/\bar{\gamma}(i) \\ b\in\B_n} } \beta^{(n)}_{b,i}\;\zeta_n^b.$$
\end{proof}

\section{Group theoretical preliminaries}\label{SectionGroupTheoreticalProperties}

Let $G$ be a finite group.
We denote by $Z(G)$ the center of $G$.
If $g\in G$ then $|g|$ denotes the order of $g$, $\langle g\rangle$ denotes the cyclic group generated by $g$,
and $g^{G}$ denotes the conjugacy class of $g$ in $G$.
If $R$ is a ring then $RG$ denotes the group ring of $G$ with coefficients in $R$.
If $\alpha=\sum_{g\in G} \alpha_g g$ is an element of a group ring $RG$, with each $\alpha_g\in R$, then the partial augmentation of $\alpha$ at $g$ is defined as
	$$\var_{g}(\alpha)=\sum_{h\in g^{G}}\alpha_{h}.$$

We collect here some known results on partial augmentations of an element $u$ of order $n$ in $\V(\Z G)$.
\begin{enumerate}[(A)]
\item\label{BermanHigman}
\cite[Proposition~1.5.1]{EricAngel1}
(Berman-Higman Theorem).
If $g\in Z(G)$ and $u\ne g$ then $\varepsilon_g(u)=0$.

\item\label{OrdersDivides} \cite[Theorem~2.3]{HertweckBrauer} If $g\in G$ and $\varepsilon_g(u)\ne 0$ then $|g|$ divides $n$.

\item\label{MRSW}
\cite[Theorem~2.5]{MarciniakRitterSehgalWeiss}
$u$ is rationally conjugate to an element of $G$ if and only if $\varepsilon_g(u^d) \geq 0$ for all $g \in G$ and all divisors $d$ of $n$.

\item\label{equationHELP}
\cite{LutharPassi,HertweckBrauer}
Let $F$ be a field of characteristic $t\ge 0$ with $t\nmid n$.
Let $\rho$ be an $F$-representation of $G$.
If $t\ne 0$ then let $\xi_n$ be a primitive $n$-th root of unity in $F$, so that if $t=0$ then $\xi_n=\zeta_n$.
Let $T$ be a set of representatives of the conjugacy classes of $t$-regular elements of $G$ (all the conjugacy classes if $t=0$).
Let $\chi$ denote the character afforded by $\rho$ if $t=0$, and the $t$-Brauer character of $G$ afforded by $\rho$ if $t>0$ (using a group isomorphism associating $\xi_n$ to $\zeta_n$).
Then for every integer $\ell$, the multiplicity of $\xi_n^{\ell}$ as eigenvalue of $\rho(u)$ is
$$\frac{1}{n}\sum_{x\in T}\sum_{d\mid n}\varepsilon_x(u^d)\Tr_{\Q(\zeta_n^d)/\Q}(\chi(x)\zeta_n^{-\ell d})$$
\end{enumerate}

In the remainder of the paper we fix an odd prime power $q$ and let $G=\SL(2,q)$, $\overline{G}=\PSL(2,q)$ and let  $\pi:G\rightarrow \overline{G}$ denote the natural projection, which we extend by linearity to a ring homomorphism
$\pi:\Z G \rightarrow \Z \overline{G}$.

We collect some group theoretical properties of $G$ and $\overline{G}$ (see e.g. \cite[Theorem~38.1]{Dornhoff1971}).

\begin{enumerate}[(A)]
\setcounter{enumi}{4}
\item\label{CCSL}
$G$ has a unique element $J$ of order $2$ and $q+4$ conjugacy classes.
More precisely, if $p$ is the prime dividing $q$ then $2$ of the classes are formed by elements of order $p$, another $2$ are formed by elements of order $2p$ and $q$ classes are formed by elements of order dividing $q+1$ or $q-1$.
Furthermore, if $g$ and $h$ are $p$-regular elements of $G$ and $|h|$ divides $|g|$ then $h$ is conjugate in $G$ to an element of $\GEN{g}$ and two elements of $\GEN{g}$ are conjugate in $G$ if and only if they are equal or mutually inverse.

\item\label{CCPSL}
Let $C$ be a conjugacy class of $\overline{G}$ formed by elements of order $n$.
If $n=2$ then $\pi^{-1}(C)$ is the only conjugacy class of $G$ formed by elements of order $4$.
Otherwise,  $\pi^{-1}(C)$ is the union of two conjugacy classes $C_1$ and $C_2$ of $G$ with $C_2=JC_1$.
Furthermore, if $n$ is multiple of $4$ then the elements of $C_1$ and $C_2$ have order $2n$ while if $n$ is not multiple of $4$ then one of the classes $C_1$ or $C_2$ is formed by elements of order $n$.
\end{enumerate}

The following proposition collects some consequences of these facts.

\begin{proposition}\label{Easy}
Let $u$ be a torsion element of $\V(\Z G)$ and let $n=|u|$.
\begin{enumerate}
\item\label{Order2} $J$ is the unique element of order $2$ in $\V(\Z G)$.

\item\label{OrderPiu} $|\pi(u)|=\frac{n}{\gcd(2,n)}$.

\item\label{LiftingNot4} If $4\nmid n$ and $\pi(u)$ is rationally conjugate to an element of $\overline{G}$ then $u$ is rationally conjugate to an element of $G$.

\item\label{NotMultiple4}
If $\gcd(n,q)=1$ and $4\nmid n$ then $u$ is rationally conjugate to an element of $G$.

\item\label{Spectrum} If $\gcd(n,q)=1$ then $G$ has an element of order $n$.

\item\label{Multiple-p}
Suppose that $q=p^f$ with $p$ prime, $f\le 2$ and $p\mid n$. Then $u$ is rationally conjugate to an element of $G$.

\item\label{MultiplicityInverses} If $\rho$ is a representation of $G$ and $\zeta$ is a root of unity of order dividing $n$ then $\zeta$ and $\zeta^{-1}$ have the same multiplicity as eigenvalues of $\rho(u)$.
\end{enumerate}
\end{proposition}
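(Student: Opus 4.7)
Parts (1) and (2) are immediate from the Berman--Higman theorem (A) combined with the order restriction (B). For (1), a unit $u$ of order $2$ has partial augmentations supported on $\{1, J\}$ (both central), so (A) forces $u \in \{1, J\}$; since $|u| = 2$, $u = J$. For (2), using $\ker \pi = \{1, J\}$ and the fact from (1) that $u^{n/2} = J$ when $n$ is even, a short case analysis yields $|\pi(u)| = n/\gcd(2, n)$.

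For (3) the plan is to reduce to $n$ odd. When $n \equiv 2 \pmod 4$, write $n = 2m$ with $m$ odd; then $J = u^m$ by (1), so $Ju = u^{m+1}$ has order $m$ (a direct $\gcd$ computation using $\gcd(2m, m+1) = 2$) and satisfies $\pi(Ju) = \pi(u)$. The odd case applied to $Ju$ produces some $h \in G$ with $Ju$ rationally conjugate to $h$, whence $u = J \cdot (Ju)$ is rationally conjugate to $Jh \in G$. For $n$ odd, (B) forces the support of each $\var_g(u^d)$ to lie on elements of odd order, and (F) implies $\pi$ induces a bijection between conjugacy classes of odd order in $G$ and in $\overline{G}$ preserving partial augmentations; the hypothesis on $\pi(u)$ via (C) then lifts to $u$ via (C) applied in $G$.

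Parts (4), (5), and (6) combine the preceding parts with established results. Part (4) follows from (3) and (2): $|\pi(u)|$ is odd and coprime to $q$, for which the Zassenhaus Conjecture in $\V(\Z \PSL(2,q))$ is available from \cite{MargolisdelRioSerrano17}. Part (5) is obtained by applying (4) iteratively to the prime-power parts $u^{n/n_\ell}$ (which have order $n_\ell$ coprime to $q$ and, for odd $\ell$, to $4$), forcing each $n_\ell$ to divide $q-1$ or $q+1$; the observation $\gcd(q-1, q+1) \le 2$ then places all odd prime powers of $n$ on a common side, and a separate argument using the structure of Sylow $2$-subgroups handles the $2$-part, yielding $n \mid q - 1$ or $n \mid q + 1$. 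By (E), $G$ has cyclic subgroups of each of these orders. Part (6) is a direct citation of the $\SL(2,p)$ and $\SL(2,p^2)$ results established in \cite{delRioSerrano17}.

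The main obstacle is part (7). The plan is to invoke HELP (D) for each irreducible constituent $\chi$ of $\rho$ and express the difference of the multiplicities of $\zeta = \zeta_n^\ell$ and $\zeta^{-1}$ as eigenvalues of $\rho(u)$ as $\frac{1}{n}\sum_{x,d}\var_x(u^d)\bigl[\Tr_{\Q(\zeta_n^d)/\Q}(\chi(x)\zeta_n^{-\ell d}) - \Tr_{\Q(\zeta_n^d)/\Q}(\chi(x)\zeta_n^{\ell d})\bigr]$. Since complex conjugation lies in $\Gal(\Q(\zeta_n)/\Q)$ and cyclotomic traces are Galois-invariant, the two traces coincide whenever $\chi(x) \in \R$. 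By (E), every $p$-regular element of $G$ is $G$-conjugate to its inverse via the Weyl element of the maximal torus containing it; hence every irreducible character of $G$ is real on $p$-regular classes. When $n$ is coprime to $p$, (B) confines the support of each $\var_g(u^d)$ to $p$-regular classes and the argument closes. In the residual case $p \mid n$, (6) identifies $u$ with a group element $g \in G$ of order $p$ or $2p$, and the eigenvalue pairing is verified directly from the character values of $\rho$; this final step, navigating the non-real character values on unipotent classes when $q \equiv 3 \pmod 4$, is the technical hinge of the argument.
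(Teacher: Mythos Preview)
Parts (1), (3), and (4) are essentially correct and track the paper's proof. The remaining parts have problems.

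Your argument for (5) contains a genuine gap. Knowing that each prime power $n_\ell$ divides $q-1$ or $q+1$ does \emph{not} force them all to a common side: for instance $q=19$ has $3\mid q-1$ and $5\mid q+1$, yet $15$ divides neither, and $\SL(2,19)$ has no element of order $15$. The observation $\gcd(q-1,q+1)=2$ only prevents a single odd prime power from dividing both sides; it says nothing about distinct primes landing on opposite sides. What is actually needed is a result ruling out such ``mixed'' orders in $\V(\Z G)$, and this is not a consequence of anything you have established. The paper instead invokes \cite[Proposition~6.7]{HertweckBrauer} applied to $\pi(u)$, which directly yields an element of $\overline{G}$ of order $|\pi(u)|=n/\gcd(2,n)$, and then lifts to $G$.

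There are smaller issues elsewhere. For (2), the phrase ``$\ker\pi=\{1,J\}$'' describes the kernel of $G\to\overline{G}$, not of $\V(\Z G)\to\V(\Z\overline{G})$; to pass from $\pi(u^m)=1$ to $u^m\in\{1,J\}$ you must either cite \cite{MargolisHertweck} (as the paper does) or run a Berman--Higman argument on both central coefficients of $u^m$---your write-up does neither explicitly. For (6), the paper does not simply cite \cite{delRioSerrano17}; it combines \cite[Theorem~A]{BachleMargolisPrimaryI} (giving $|\pi(u)|=p$) with \cite[Proposition~6.1]{HertweckBrauer} and then part (3). For (7), your treatment of the case $p\mid n$ is neither needed nor salvageable: the paper only applies (7) when $\gcd(n,q)=1$, and the statement is in fact false for arbitrary $\rho$ when $p\mid n$ (take $G=\SL(2,3)$, $u$ an element of order $3$, and $\rho$ a nontrivial one-dimensional representation; then $\rho(u)$ has $\zeta_3$ as eigenvalue with multiplicity one and $\zeta_3^{-1}$ with multiplicity zero).
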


\begin{proof}
\eqref{Order2}  is a direct consequence of \eqref{BermanHigman} and \eqref{CCSL}.

\eqref{OrderPiu} By the main result of \cite{MargolisHertweck}, if $\pi(u)=1$ then $u^2=1$ and hence either $u=1$ or $u=J$, by \eqref{Order2}.
Then \eqref{OrderPiu} follows.
	
\eqref{LiftingNot4}
Suppose that $n$ is not multiple of $4$.
If $n$ is even  then the order of $Ju$ is odd, by \eqref{Order2}.
Thus, we may assume without loss of generality that the order of $u$ is odd.
If $\varepsilon_g(u)\ne 0$ then $|g|$ is odd, by \eqref{OrdersDivides}, and hence $\varepsilon_g(u)=\varepsilon_{\pi(g)}(\pi(u))\ge 0$, by \eqref{CCPSL}.
Thus $u$ is rationally conjugate to an element of $G$.

\eqref{NotMultiple4}
Let $q=p^f$ where $p$ is an odd prime number.
If $p\nmid n$ and $4\nmid n$ then $|\pi(u)|$ is coprime with $2q$, by (\ref{OrderPiu}), and hence $\pi(u)$ is rationally conjugate to an element of $\overline{G}$, by  \cite[Theorem~1.1]{MargolisdelRioSerrano17}.
Then $u$ is rationally conjugate to an element of $G$ by \eqref{LiftingNot4}.

\eqref{Spectrum} is a consequence of \eqref{OrderPiu} and \cite[Proposition~6.7]{HertweckBrauer}.

\eqref{Multiple-p}
In this case $|\pi(u)|=p$ by \eqref{OrderPiu} and \cite[Theorem~A]{BachleMargolisPrimaryI}.
Then $n$ is either $p$ or $2p$, by \eqref{OrderPiu}, and $\pi(u)$ is rationally conjugate to an element of $\overline{G}$, by \cite[Proposition~6.1]{HertweckBrauer}.
Thus $u$ is rationally conjugate to an element of $G$, by \eqref{LiftingNot4}.

\eqref{MultiplicityInverses} is a consequence of \eqref{CCSL} and the formula in \eqref{equationHELP}.
\end{proof}

Observe that for $q$ odd, Theorem~\ref{SL2Result} follows at once from Theorem~\ref{main} and statement \eqref{Multiple-p} of Proposition~\ref{Easy}.
On the other hand $\SL(2,2)\cong S_3$ and $\SL(2,4)\cong A_5$ for which the Zassenhaus Conjecture is well known.
So in the remainder of the paper we concentrate on proving Theorem~\ref{main}.
For that from now on $t$ denotes the prime dividing $q$ (we want to set free the letter $p$ to denote an arbitrary prime) and we introduce some $t$-Brauer characters of $G$.

Let $g$ be an element of $G$ of order $n$ with $t\nmid n$ and let $\xi_n$ denote a primitive $n$-th root of unity in a field $F$ of characteristic $t$.
Adapting the proof of \cite[Lemma~1.2]{Margolis2016} we deduce that for every positive integer $m$ there is an $F$-representation $\Theta_m$ of $G$ of degree $1+m$ such that
    \begin{equation}\label{SL2ModularCharacters}
    \Theta_{m}(g)=
    \begin{cases}
    \diag\left(1, \xi_n^2, \xi_n^{-2}, \ldots, \xi_n^{m}, \xi_n^{-m} \right), & \text{if }2\mid m;\\
    \diag\left(\xi_n, \xi_n^{-1}, \xi_n^{3}, \xi_n^{-3}, \ldots, \xi_n^{m}, \xi_n^{-m} \right), & \text{if }2\nmid m.\\
    \end{cases}
    \end{equation}
In particular, the restriction to $\GEN{g}$ of the $t$-Brauer character associated to $\Theta_m$ is given by
    \begin{equation*}
    \chi_m(g^i)=\sum_{\substack{j=-m\\j\equiv m \bmod 2}}^{m}\zeta_{n}^{ij}.
    \end{equation*}

\section{Prime power order}\label{SectionPrimePowerOrder}

In this section we prove the following particular case of Theorem~\ref{main}.

\begin{proposition}\label{PropositionPrimePowerOrder}
Let $G=\SL(2,q)$ with $q$ an odd prime power and let $u$ be a torsion element of $\V(\Z G)$.
If the order of $u$ is a prime power and it is coprime with $q$ then $u$ is rationally conjugate to an element of $G$.
\end{proposition}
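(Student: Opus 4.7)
The plan is to reduce to the only nontrivial case $n=|u|=2^k$ with $k\geq 2$, and then apply the HeLP method \eqref{equationHELP} with the modular Brauer characters $\chi_m$ constructed via \eqref{SL2ModularCharacters}, arguing by induction on $k$.

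By Proposition~\ref{Easy}\eqref{NotMultiple4}, any torsion element of $\V(\Z G)$ whose order is a prime power coprime with $q$ and not a multiple of $4$ is already rationally conjugate to an element of $G$, so it suffices to treat $n=2^k$ with $k\geq 2$. Proposition~\ref{Easy}\eqref{Order2} gives $u^{2^{k-1}}=J$, and arguing by induction on $k$ (the base case $k=1$ forcing $u\in\{1,J\}\subseteq G$) I may assume that for every $1\leq j\leq k-1$ all partial augmentations of $u^{2^j}$ are non-negative. By \eqref{MRSW} it then remains to show $\var_h(u)\geq 0$ for every $h\in G$.

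Proposition~\ref{Easy}\eqref{Spectrum} guarantees an element $g\in G$ of order $n$; necessarily $\GEN{g}$ sits inside a cyclic subgroup of $G$ of order $q-1$ or $q+1$. Combining \eqref{CCSL}, \eqref{OrdersDivides} and \eqref{BermanHigman}, the only partial augmentations of $u$ that can be nonzero are the $\var_{g^s}(u)$ with $1\leq s<n/2$, and they sum to $1$. I now apply \eqref{equationHELP} with the character $\chi_m$ associated to $\Theta_m$: for each $m$ and each $\ell$, the multiplicity of $\xi_n^\ell$ as an eigenvalue of $\Theta_m(u)$ is a non-negative integer, producing a linear inequality on the $\var_{g^s}(u^d)$, $d\mid n$. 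Using \eqref{TraceRootOfUnity} to evaluate the cyclotomic traces and rewriting in the basis $\mathcal{B}_n$ of Proposition~\ref{BaseLemma} via the explicit coefficient formula \eqref{CoefLambda}, these inequalities become clean $\Z$-linear conditions on the unknowns.

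The main obstacle is the combinatorial problem of picking enough characters $\chi_m$ and eigenvalue indices $\ell$ to force non-negativity of each of the $n/2-1$ unknowns $\var_{g^s}(u)$. This is where the prime-power nature of $n$ is crucial: the basis $\mathcal{B}_n$ and the M\"obius factors in \eqref{CoefLambda} are sparse (with $\gamma_n(i)\in\{1,2\}$), so a short list of characters, say $m\in\{1,2\}$, together with varying $\ell$, should already produce a system of inequalities whose feasible region, intersected with the hyperplane $\sum_s \var_{g^s}(u)=1$, forces every $\var_{g^s}(u)\geq 0$, completing the proof via \eqref{MRSW}.
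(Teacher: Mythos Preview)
Your reduction to $n=2^r$ with $r\ge 2$, the induction on $r$, and the plan to apply \eqref{equationHELP} with the characters $\chi_m$ are all correct and match the paper's strategy. The case $r=2$ is indeed trivial (there is a unique conjugacy class of order~$4$), and using \eqref{MRSW} to reduce to non-negativity of the $\varepsilon_{g^s}(u)$ is the right move.

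The genuine gap is your final paragraph. The assertion that ``a short list of characters, say $m\in\{1,2\}$'' should suffice is not a proof, and it is almost certainly false for $r$ large: the number of unknown partial augmentations is $2^{r-1}-1$, and the constraints coming from $\chi_1$ (degree~$2$) and $\chi_2$ (degree~$3$) carry far too little information to pin them down. In the paper the argument requires the characters $\chi_{2^h}$ for \emph{every} $0\le h\le r-2$, i.e.\ characters of degree up to $2^{r-2}+1$, and the combinatorics is handled by a second induction, this time on $h$, establishing three intertwined statements of the form
\[
\sum_{k\in X_h}(\varepsilon_k-\varepsilon_{k+2^{r-h-1}})=\pm 1,\qquad
\varepsilon_i=\varepsilon_j\ \text{for suitable }i,j,\qquad
\varepsilon_i=0\ \text{for }2^{r-h-1}\mid i,
\]
each step using $A(\chi_{2^h},\ell)$ and $B(\chi_{2^h},\ell)$ for carefully chosen $\ell$. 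Your proposal stops precisely where the real work begins.

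Two further remarks. First, the paper does \emph{not} route this proposition through the basis $\mathcal{B}_n$ of Proposition~\ref{BaseLemma}; that machinery is reserved for the composite-order case in Section~\ref{SectionProofTheorem}. Instead, the prime-power case is handled by direct trace computations organized via the quantities $A(\chi,\ell)$ and $B(\chi,\ell)$ and the observation that $B(\chi,\ell+2^{r-1})=B(\chi,\ell)$ while $A(\chi,\ell+2^{r-1})=-A(\chi,\ell)$, which forces $A(\chi,\ell)=0$ whenever $B(\chi,\ell)=0$. Second, the paper also invokes the known result for $\PSL(2,q)$ (that $\pi(u)$ is rationally conjugate to an element of order $2^{r-1}$) to obtain $\varepsilon_{2^{r-2}}=0$ at the outset; your sketch omits any such input from the quotient.
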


\begin{proof}
By Proposition~\ref{Easy}.\eqref{NotMultiple4} we may assume that $|u|=2^r$ with $r\ge 3$.
We argue by induction on $r$.
So we assume that units of order $2^k$ with $1\leq k \leq r-1$ are rationally conjugate to an element of $G$.
By Proposition~\ref{Easy}.\eqref{Spectrum} and \eqref{CCSL}, $G$ has an element $g_0$ of order $2^r$ such that
$\{g_0^k : k=0,1,2,\ldots , 2^{r-1}\}$ is a set of representatives of the conjugacy classes of $G$ with order a divisor of $2^r$.
By \eqref{OrdersDivides}, the only possible non-zero partial augmentations of $u$ are the integers $\varepsilon_k=\varepsilon_{g_0^k}(u)$, with $k=1,\dots,2^{r-1}-1$.
By the induction hypothesis, if $1\le i\le r$ and $g\in G$ then $\varepsilon_g(u^{2^i})\ge 0$. 
Hence,  by \eqref{MRSW}, it suffices to prove that $\varepsilon_k=0$ for all but one $k=0,1,\dots,2^{r-1}$.

By \cite[Theorem~2]{Margolis2016} and Proposition~\ref{Easy}.\eqref{OrderPiu}, $\pi(u)$ is rationally conjugate to an element of order $2^{r-1}$ in $\overline{G}$ and hence $\varepsilon_{2^{r-2}}=\varepsilon_{\pi(g_0)^{2^{r-2}}}(\pi(u))=0$, by \eqref{CCPSL}.

For a $t$-Brauer character $\chi$ of $G$ and an integer $\ell$ define
$$A(\chi,\ell)=\sum_{k=1}^{2^{r-1}-1}\varepsilon_k\cdot \Tr_{\Q(\zeta_{2^r})/\Q}\left(\chi(g_0^k)\cdot \zeta_{2^r}^{-\ell}\right) \quad \text{and}\quad
B(\chi,\ell)=\sum_{k=0}^{r-1}\Tr_{\Q(\zeta_{2^k})/\Q}\left(\chi(g_0^{2^{r-k}}) \cdot \zeta_{2^k}^{-\ell}\right).$$
Then, by \eqref{equationHELP}, we have
\begin{equation}\label{LPPower2}
\frac{1}{2^r}\left(A(\chi,\ell) + B(\chi,\ell)\right)\in \Z_{\geq 0}.
\end{equation}
Observe that $B(\chi,\ell+2^{r-1})=B(\chi,\ell)$ and $A(\chi,\ell+2^{r-1})=-A(\chi,\ell)$.
Therefore, from \eqref{LPPower2} it follows that
\begin{align}
\label{B=0} & \text{if } B(\chi,\ell)=0  \text{ then }  A(\chi,\ell)=0; \\
\label{B=2^{r-1}} & \text{if } B(\chi,\ell)=2^{r-1} \text{ then }  A(\chi,\ell)=\pm 2^{r-1}.
\end{align}

We will calculate $B(\chi,\ell)$ and $A(\chi,\ell)$ for several $t$-Brauer characters $\chi$ and several integers $\ell$ and for that we will use \eqref{TraceRootOfUnity} without further mention. We start proving that
	\begin{equation}\label{B(2^h0)}
	\text{ if } 0\leq h \leq r-2 \text{ and } 2^{r-1}\mid \ell \text{ then }B(\chi_{2^h}, \ell) = \begin{cases}2^{r-1}, & \text{if }h\geq 1;\\ 0, & \text{if }h=0;\end{cases}
	\end{equation}
and
	\begin{equation}\label{B(2^hk)}
		\text{if } 0\leq h \leq r-3, \; 2^h \mid \ell \text{ and } 2^{r-1}\nmid \ell \text{ then } B(\chi_{2^h}, \ell) = \begin{cases}2^{r-1}, & \text{if } \ell\equiv \pm 2^h \bmod 2^{r-1}; \\ 0,& \text{otherwise.} \end{cases}
	\end{equation}
In both cases we argue by induction on $h$ with the cases $h=0$ and $h=1$ being straightforward.
Suppose that $1<h\le r-2$, $2^{r-1}\mid \ell$ and $B(\chi_{2^{h-1}},\ell)=2^{r-1}$.
If $j$ is even, then straightforward calculations show that $\sum_{k=0}^{r-1} \Tr_{\Q (\zeta_{2^k})/\Q} \left(\zeta_{2^k}^{2^{h-1}+j} + \zeta_{2^k}^{-2^{h-1}-j} \right)=0$.  This implies that
	$B(\chi_{2^h},\ell) = B(\chi_{2^{h-1}},\ell) + \sum_{\substack{j=2 \\ 2\mid j}}^{2^{h-1}}\sum_{k=0}^{r-1} \Tr_{\Q (\zeta_{2^k})/\Q} \left(\zeta_{2^k}^{2^{h-1}+j} + \zeta_{2^k}^{-2^{h-1}-j} \right)=2^{r-1}$. This finishes the proof of \eqref{B(2^h0)}.

Suppose that $1<h \leq r-3$, $2^h \mid \ell$ and $2^{r-1}\nmid \ell$. In this case the induction hypothesis implies    $B(\chi_{2^{h-1}},\ell)=0$.
Arguing as in the previous paragraph we get
    $B(\chi_{2^h},\ell) =
    \sum_{j=2,2\mid j}^{2^{h-1}}  \; \sum_{k=0}^{r-1} \Tr_{\Q (\zeta_{2^k})/\Q}
    \left(\left(\zeta_{2^k}^{2^{h-1}+j} + \zeta_{2^k}^{-(2^{h-1}+j)}\right)\zeta_{2^k}^{-\ell}\right)$.
However, if $j$ is even and smaller than $2^{h-1}$ then
$\sum_{k=0}^{r-1} \Tr_{\Q (\zeta_{2^k})/\Q}
\left(\left(\zeta_{2^k}^{2^{h-1}+j} + \zeta_{2^k}^{-(2^{h-1}+j)}\right)\zeta_{2^k}^{-\ell}\right)=0$.
Therefore, having in mind that $\zeta_{2^{h+2}}^{2^h} + \zeta_{2^{h+2}}^{-2^h}=0$ we have
$$B(\chi_{2^h},\ell) =
		\sum_{k=0}^h \Tr_{\Q(\zeta_{2^k})/\Q} \left( \left( \zeta_{2^k}^{2^h} + \zeta_{2^k}^{-2^h} \right)\zeta_{2^k}^{-\ell} \right) + \epsilon 2^{h+1} +
		\sum_{k=h+3}^{r-1} \Tr_{\Q(\zeta_{2^k})/\Q} \left( \left( \zeta_{2^k}^{2^h} + \zeta_{2^k}^{-2^h} \right)\zeta_{2^k}^{-\ell} \right),
$$
where $\epsilon=1$ if $2^{h+1}\nmid \ell$ and $\epsilon=-1$ otherwise.
Then the claim follows using the following equalities that can be proved by straightforward calculations: $\sum_{k=0}^h \Tr_{\Q(\zeta_{2^k})/\Q} \left( \left( \zeta_{2^k}^{2^h} + \zeta_{2^k}^{-2^h} \right)\zeta_{2^k}^{-\ell} \right)=2^{h+1}$ and
	$$\sum_{k=h+3}^{r-1} \Tr_{\Q(\zeta_{2^k})/\Q} \left( \left( \zeta_{2^k}^{2^h} + \zeta_{2^k}^{-2^h} \right)\zeta_{2^k}^{-\ell} \right) =
	\begin{cases}
	0, & \text{if } 2^{h+1}\mid \ell;\\
	2^{r-1} - 2^{h+2}, & \text{if }2^{h+1}\nmid \ell \text{ and } \ell\equiv \pm 2^h \bmod 2^{r-1};\\
	-2^{h+2}, & \text{if } 2^{h+1}\nmid \ell \text{ and } \ell\not\equiv \pm 2^h \bmod 2^{r-1}.
	\end{cases}
	$$
This finishes the proof of \eqref{B(2^hk)}.

We now prove, by induction on $h$, that the following two statements hold for any integer $0\leq h \leq r-3$:
\begin{align}
\label{Statement 1-1}
    & \sum_{k\in X_h} (\varepsilon_k - \varepsilon_{k+2^{r-h-1}})=\pm 1, \text{ where } X_h=\{i\in \{1,\dots,2^{r-2}\} : i\equiv \pm 1 \bmod 2^{r-h}\}; \\
\label{Statement i=j}
    & \text{if } i\equiv \pm j \bmod 2^{r-h-1} \text{ and } i\not\equiv 0,\pm 1\bmod 2^{r-h-1} \text{ then } \varepsilon_i = \varepsilon_j;
\end{align}
and that the next one holds for every $0\le h\le r-2$:
\begin{equation}\label{Statement i=0}
    \text{if } i\equiv 0 \bmod 2^{r-h-1} \text{ then } \varepsilon_i=0.
\end{equation}

Observe that $X_0=\{1\}$.
Fix an integer $i$. Then for every integer $k$ we have
		$$\Tr_{\Q(\zeta_{2^r})/\Q}\left(\left(\zeta_{2^r}^k + \zeta_{2^r}^{-k}\right)\zeta_{2^r}^{-i}\right) =
		\begin{cases}  2^{r-1}, & \text{if } k\equiv i \mod 2^r;\\
		- 2^{r-1}, & \text{if } k \equiv 2^{r-1} -i \mod 2^r;\\
		0,& \text{otherwise}.
		\end{cases}$$
Thus $A(\chi_1, i) = 2^{r-1} \left(\varepsilon_i - \varepsilon_{i+2^{r-1}}\right)$ and hence for $h=0$, \eqref{Statement 1-1} and \eqref{Statement i=j} follows at once from \eqref{B=0}, \eqref{B=2^{r-1}} and \eqref{B(2^hk)}.
Moreover, for $h=0$ \eqref{Statement i=0} is clear because $\varepsilon_{2^{r-1}}=0$.

Suppose $0<h\le r-3$ and (\ref{Statement 1-1}), (\ref{Statement i=j}) and (\ref{Statement i=0}) hold for $h$ replaced by $h-1$. Suppose also that $i\not\equiv 0\bmod 2^{r-h-1}$. To prove (\ref{Statement 1-1}) and (\ref{Statement i=j}) we first compute $A(\chi_{2^h},2^hi)$ which we split in three summands:
\begin{eqnarray*}
A(\chi_{2^h},2^hi) &=& \sum_{k=1}^{2^{r-1}-1} \varepsilon_k \Tr_{\Q(\zeta_{2^r})/\Q}(\zeta_{2^r}^{-2^hi})
    + \sum_{\substack{j=2\\2\mid j}}^{2^h-2} \sum_{k=1}^{2^{r-1}-1} \varepsilon_k \Tr_{\Q(\zeta_{2^r})/\Q}\left(\left(\zeta_{2^r}^{kj}+\zeta_{2^r}^{-kj}\right)\zeta_{2^r}^{-2^hi}\right) \\
    &&+ \sum_{k=1}^{2^{r-1}-1} \varepsilon_k
        \Tr_{\Q(\zeta_{2^r})/\Q}\left( \zeta_{2^r}^{2^h(k-i)} +\zeta_{2^r}^{-2^h(k+i)}\right).
\end{eqnarray*}
We now prove that the first two summands are $0$.
This is clear for the first one because $2^{r-1}\nmid 2^hi$.
To prove that the second summand is $0$ let $2\le j \le 2^h-2$ and $2\mid j$.
Observe that $2^h\nmid j$. Thus, if $k$ is odd then the order of $\zeta_{2^r}^{\pm kj-2^hi}$ is multiple of $2^{r-h-1}$ and, as $h\le r-3$, we deduce that
$\Tr_{\Q(\zeta_{2^r})/\Q}\left(\zeta_{2^r}^{kj-2^hi}\right)=\Tr_{\Q(\zeta_{2^r})/\Q}\left(\zeta_{2^r}^{-kj-2^hi}\right)=0$.
Thus we only have to consider the summands with $k$ even.
Actually we can exclude also the summands with $2^{r-h}\mid k$ because, by the induction hypothesis on \eqref{Statement i=0}, for such $k$ we have $\varepsilon_k=0$.
For the remaining values of $k$ (i.e. $k$ even and not multiple of $2^{r-h}$) we have $\varepsilon_k=\varepsilon_l$ if $k\equiv l \bmod 2^{r-h-1}$, by the induction hypothesis on (\ref{Statement i=j}).
So, we can rewrite $\sum_{k=1}^{2^{r-1}-1} \varepsilon_k \Tr_{\Q(\zeta_{2^r})/\Q}\left(\left(\zeta_{2^r}^{kj}+\zeta_{2^r}^{-kj}\right)\zeta_{2^r}^{-2^hi}\right)$ as
	$$\sum_{l\in \Z_{2^{r-h-1}}} \varepsilon_l \Tr_{\Q(\zeta_{2^r})/\Q}\left( \zeta_{2^r}^{l-2^hi}\left(\sum_{a=0}^{2^h-1} (\zeta_{2^r}^{2^{r-h-1}j})^a\right)+
    \zeta_{2^r}^{-l-2^hi}\left(\sum_{a=0}^{2^h-1} (\zeta_{2^r}^{-2^{r-h-1}j})^a\right)\right),$$
which is $0$ because $\zeta_{2^r}^{2^{r-h-1}j}$ is a root of unity different from $1$ and of order dividing $2^h$, as $j$ is even but not multiple of $2^h$.
This finishes the proof that the first two summands are $0$.
To finish the calculation of $A(\chi_{2^h},2^hi)$ we compute
$$
\Tr_{\Q(\zeta_{2^r})/\Q}\left( \zeta_{2^r}^{2^h(k-i)} +\zeta_{2^r}^{-2^h(k+i)}\right) =
\begin{cases}
2^{r-1}, & \text{if } k\in X_{h,i}; \\
- 2^{r-1}, & \text{if } k-2^{r-1} \in X_{h,i}; \\
0, & \text{otherwise,}
\end{cases}
$$
where $X_{h,i} = \{k\in\{1,\dots,2^{r-2}\} : k\equiv \pm i \bmod 2^{r-h}\}$.
So we have proved the following:
$$A(\chi_{2^h}, 2^hi) = 2^{r-1}\sum_{k\in X_{h,i}}(\varepsilon_k - \varepsilon_{k+2^{r-h-1}}).$$
Then \eqref{Statement 1-1} follows from \eqref{B=2^{r-1}}, \eqref{B(2^hk)} and the previous formula.
Using \eqref{B=0} we also obtain that $\sum_{k\in X_{h,i}}\varepsilon_k = \sum_{k\in X_{h,i}}\varepsilon_{k+2^{r-h-1}}$ if $i\not\equiv \pm 1 \bmod 2^{r-h-1}$.
However, in this case the induction hypothesis for \eqref{Statement i=j} means that the $\varepsilon_k$ with $k\in X_{h,i}$ are all equal and the $\varepsilon_{k+2^{r-h-1}}$ with $k\in X_{h,i}$ are all equal. Hence \eqref{Statement i=j} follows.

In order to deal with \eqref{Statement i=0}, assume that $0<h\le r-2$. By induction hypothesis
on \eqref{Statement i=0} we have $\varepsilon_k=0$ if $2^{r-h}\mid k$, and by the induction
hypothesis on \eqref{Statement i=j}, we have that $\varepsilon_k$ is constant on the set $X$ formed by integers $1\le k \le 2^{r-1}$ such that $k\equiv 2^{r-h-1} \mod 2^{r-h}$.
We will use these two facts without specific mention.
Arguing as before we have
\begin{eqnarray*}
	 A(\chi_{2^h},0) &=& \sum_{k=1}^{2^{r-1}-1}\varepsilon_k\Tr_{\Q(\zeta_{2^r})/\Q}\left( 1+\zeta_{2^r}^{2^hk} + \zeta_{2^r}^{-2^hk} \right) +
	\sum_{k=1}^{2^{r-1}-1}\varepsilon_k\Tr_{\Q(\zeta_{2^r})/\Q}\left( \sum_{j=1}^{2^{h-1}-1}\left(\zeta_{2^r}^{2jk} + \zeta_{2^r}^{-2jk}\right)\right) \\
	& =& \sum_{k=1,2^{r-h}\nmid k}^{2^{r-1}-1}\varepsilon_k\Tr_{\Q(\zeta_{2^r})/\Q}\left( 1+\zeta_{2^r}^{2^hk} + \zeta_{2^r}^{-2^hk} \right).
\end{eqnarray*}
As
$$
\Tr_{\Q(\zeta_{2^r})/\Q}\left( 1+\zeta_{2^r}^{2^hk} + \zeta_{2^r}^{-2^hk} \right) =
\begin{cases}
2^{r-1}, & \text{if } 2^{r-h-1} \nmid k;\\
- 2^{r-1}, & \text{if } 2^{r-h-1} \mid k \text{ and } 2^{r-h}\nmid k;\\
\end{cases}
$$
we obtain
	$$A(\chi_{2^h},0) = 2^{r-1}
	\left( \sum_{2^{r-h-1}\nmid k}\varepsilon_k  - \sum_{2^{r-h-1} \mid k} \varepsilon_k\right) =
	2^{r-1} \left(1-2\sum_{k\in X} \varepsilon_k\right) =
	2^{r-1} \left(1-2|X| \varepsilon_k\right).$$
From \eqref{B=2^{r-1}} and \eqref{B(2^h0)} we deduce that if $k\in X$ then $1-2|X| \varepsilon_k=\pm 1$ and hence $\varepsilon_k=0$, since  $|X|=2^{r-h-1}\ge 2$, as $h\le r-2$. This finishes the proof of \eqref{Statement i=0}.

To finish the proof of the proposition it is enough to show that $\varepsilon_i\ne 0$ for exactly one  $i\in \{1,\dots,2^{r-1}-1\}$.
If $i$ is even then $\varepsilon_i=0$, by \eqref{Statement i=0} with $h=r-2$.

We claim that if $\varepsilon_i\ne 0$ then $i\equiv \pm 1 \bmod 2^{r-1}$.
Otherwise, there are integers $2\le v\le r-2$ and $2<i<2^{r-1}-1$ satisfying $i\not \equiv \pm 1 \bmod 2^{v+1}$ and $\varepsilon_i\ne 0$.
We choose $v$ minimum with this property for some $i$.
Then (1) $\varepsilon_k=0$ for every $k\not\equiv \pm 1 \bmod 2^v$  and (2) $i\equiv \pm (k+2^v) \bmod 2^{v+1}$ for every $k\in X_{r-v-1}$.
(1) implies that  $\sum_{k\in X_{r-v-1}}(\varepsilon_k+\varepsilon_{k+2^v})=1$.
On the other hand $1\le r-v-1\le r-3$ and hence applying \eqref{Statement 1-1} and \eqref{Statement i=j} with $h=r-v-1$ we deduce from (2) that
$\varepsilon_i= \varepsilon_{k+2^v}$ for every $k\in X_{r-v-1}$ and
$\sum_{k\in X_{r-v-1}}(\varepsilon_k-\varepsilon_{k+2^v})=\pm 1$.
Using $|X_{r-v-1}|=2^{r-v-1}$ and $\varepsilon_i\ne 0$ we deduce that  $2^{r-v}\varepsilon_i=2\sum_{k\in X_{r-v-1}} \varepsilon_{k+2^{r-v}}  = 2$, in contradiction with $2\le r-v$.
This finishes the proof of the claim.

Then the only possible non-zero partial augmentations of $u$ are $\varepsilon_1$ and $\varepsilon_{2^{r-1}-1}$.
Hence $\varepsilon_1+\varepsilon_{2^{r-1}-1}=1$ and, by applying \eqref{Statement 1-1} with $h=0$ we deduce that $\varepsilon_1-\varepsilon_{2^{r-1}-1}=\pm 1$.
Therefore, either $\varepsilon_1=0$ or $\varepsilon_{2^{r-1}-1}=0$, i.e.
$\varepsilon_i\ne 0$ for exactly one $i\in \{1,\dots,2^{r-1}-1\}$, as desired.
\end{proof}

\section{Proof of Theorem~\ref{main}}\label{SectionProofTheorem}

In this section we prove Theorem~\ref{main}.
Recall that $G=\SL (2,q)$ with $q=t^f$ and $t$ an odd prime number, $\overline{G}=\PSL(2,q)$, $\pi:G\rightarrow \overline{G}$ is the natural projection and $u$ is an element of order $n$ in $\V(\Z G)$ with $\gcd(n,q)=1$.
We have to show that $u$ is rationally conjugate to an element of $G$.
By Proposition~\ref{Easy}.\eqref{NotMultiple4}, we may assume that $n$ is multiple of $4$ and by Proposition~\ref{PropositionPrimePowerOrder} that $n$ is not a  prime power.
Moreover, we may also assume that $n\ne 12$ because this case follows easily from known results and the HeLP Method.
Indeed, if $n=12$ then $\pi(u)$ has order $6$, by Proposition~\ref{Easy}.\eqref{OrderPiu} and hence  $\pi(u)$ is rationally conjugate to an element of $\overline{G}$, by \cite[Proposition~6.6]{HertweckBrauer}.
Using this and the fact that  $G$ has a unique conjugacy class for each of the orders $3$, $4$ or $6$ and two conjugacy classes of elements of order $12$, and applying \eqref{equationHELP} with $\chi=\chi_1$ and $\ell=1,5$ it easily follows that all the partial augmentations of $u$ are non-negative.

In the remainder we follow the strategy of the proof of the main result of \cite{MargolisdelRioSerrano17}.
The difference with the arguments of that paper is twofold: On the one hand, in our case $n$ is even (actually multiple of $4$) and this introduces some difficulties not appearing in \cite{MargolisdelRioSerrano17} where $n$ was odd.
On the other hand for $\SL(2,q)$ we have more Brauer characters than for $\PSL(2,q)$ and this will help to reduce some cases.

As the order $n$ of $u$ is fixed throughout, we simplify the notation of the Section~\ref{SectionNumberTheory} by setting
    $$\gamma=\gamma_n, \quad \bar{\gamma} = \bar{\gamma}_n, \quad \alpha_x=\alpha^{(n)}_x, \quad \kappa_x=\kappa_{x}^{(n)}, \quad \beta_{b,x}=\beta^{(n)}_{b,x}, \quad \B=\B_n, \quad \mathcal{B}=\mathcal{B}_n.$$

We argue by induction on $n$.
So we also assume that $u^d$ is rationally conjugate to an element of $G$ for every divisor $d$ of $n$ with $d\ne 1$.

We will use the representations $\Theta_m$ and $t$-Brauer characters $\chi_m$ introduced in \eqref{SL2ModularCharacters}.
Observe that the kernel of $\Theta_m$ is trivial if $m$ is odd, and otherwise it is the center of $G$.
Using this and the induction hypothesis on $n$ it easily follows that the order of $\Theta_m(u)$ is $\frac{n}{2}$ if $m$ is even, while, if $m$ is odd then the order of $\Theta_m(u)$ is $n$.
Combining this with Proposition~\ref{Easy}.\eqref{MultiplicityInverses} we deduce that $\Theta_{1}(u)$ is conjugate to $\diag(\zeta, \zeta^{-1})$ for a suitable primitive $n$-th root of unity $\zeta$.
Hence there exists an element $g_{0}\in G$ of order $n$ such that $\Theta_{1}(g_{0})$ and $\Theta_{1}(u)$ are conjugate.
The element $g_0 \in G$ and the primitive $n$-th root of unity $\zeta$ will be fixed throughout and
from now on we abuse the notation and consider $\zeta$ both as a primitive $n$-th root of unity in a field of characteristic $t$ and as a complex primitive $n$-th root of unity.
Then
\begin{equation*}
\Theta_{m}(g_{0})\text{ is conjugate to } \begin{cases}
\diag\left(1, \zeta^2, \zeta^{-2}, \ldots, \zeta^{m}, \zeta^{-m} \right), & \text{if }2\mid m;\\
\diag\left(\zeta, \zeta^{-1}, \zeta^{3}, \zeta^{-3}, \ldots, \zeta^{m}, \zeta^{-m} \right), & \text{if }2\nmid m;\\
\end{cases}
\end{equation*}
and
\begin{equation}\label{ChiExpresion}
\chi_m(g_{0}^{i})=\sum_{\substack{j=-m\\j\equiv m \bmod 2}}^{m}\zeta^{ij} =
\begin{cases}
1+\alpha_{2i}+\alpha_{4i}+\dots+\alpha_{mi}, & \text{if } 2\mid m; \\
\alpha_{i}+\alpha_{3i}+\dots+\alpha_{mi}, & \text{if } 2\nmid m.
\end{cases}
\end{equation}
By the induction hypothesis on $n$, if $c$ is a divisor of $n$ with $c\ne 1$ then $u^c$ is rationally conjugate to $g_0^i$ for some $i$ and hence $\zeta^c=\zeta^{\pm i}$. Therefore $c\sim_n i$ and hence $u^c$ is conjugate to $g_0^c$.

By \eqref{CCSL}, two elements of $\GEN{g_0}$ are conjugate in $G$ if and only if they are equal or mutually inverse.
Moreover, every element $g\in G$, with $g^n=1$, is conjugate to an element of $\GEN{g_0}$.
Therefore $x\mapsto (g_0^x)^G$ induces a bijection from $\Gamma_n$ to the set of conjugacy classes of $G$ formed by elements of order dividing $n$.
For an integer $x$ (or $x\in\Gamma_n$) we set
	$$\varepsilon_x = \varepsilon_{g_0^x}(u)
		\quad \text{and} \quad
	\lambda_x = \sum_{i\in \Gamma_n} \varepsilon_i \alpha_{ix}.$$

Our main tool is the following lemma whose proof is exactly the same as the one of Lemma~4.1 in \cite{MargolisdelRioSerrano17}.

\begin{lemma}\label{LemmaLambdaAlpha}
	$u$ is rationally conjugate to $g_0$ if and only if $\lambda_i = \alpha_i$, for any positive integer $i$.
\end{lemma}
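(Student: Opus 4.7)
My plan is to prove the two implications separately. The forward implication ($u$ rationally conjugate to $g_0$ implies $\lambda_i = \alpha_i$) is essentially a tautology: if $u$ is rationally conjugate to $g_0$, then all partial augmentations of $u$ vanish off the $G$-conjugacy class of $g_0$, and by \eqref{CCSL} the $G$-conjugacy class of $g_0^j$ inside $\GEN{g_0}$ is exactly the $\sim_n$-class of $j$. Hence $\varepsilon_1 = 1$ and $\varepsilon_j = 0$ for every $j \in \Gamma_n$ with $j \neq [1]$, and substitution gives $\lambda_i = \alpha_i$ for every positive integer $i$.

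The backward direction is where the real work lies. Assuming $\lambda_i = \alpha_i$ for every positive integer $i$, I would rewrite the identities
$$\sum_{j \in \Gamma_n} \varepsilon_j (\zeta^{ji} + \zeta^{-ji}) = \zeta^i + \zeta^{-i} \qquad (i \in \Z)$$
as a single Fourier identity on $\Z/n\Z$. To do so I introduce symmetric coefficients $c_k$ ($k \in \Z/n\Z$) by setting $c_k = \varepsilon_{[k]}$ for $k \notin \{0, n/2\}$ and $c_k = 2\,\varepsilon_{[k]}$ for $k \in \{0, n/2\}$, so that $c_k = c_{-k}$ and
$$\sum_{k \in \Z/n\Z} c_k \zeta^{ki} = \zeta^i + \zeta^{-i} \qquad \text{for every } i \in \Z.$$
Multiplying by $\zeta^{-\ell i}$ and averaging over $i \in \Z/n\Z$ (orthogonality of the characters of $\Z/n\Z$) forces $c_\ell = 1$ when $\ell \equiv \pm 1 \bmod n$ and $c_\ell = 0$ otherwise. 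Since $n$ is a multiple of $4$ we have $1 \notin \{0, n/2\} \pmod n$, so this translates back into $\varepsilon_1 = 1$ and $\varepsilon_j = 0$ for every $j \in \Gamma_n$ different from $[1]$.

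To conclude rational conjugacy specifically with $g_0$, I would combine the non-negativity of the $\varepsilon_g(u)$ just established with the inductive hypothesis on $n$, which supplies $\varepsilon_g(u^d) \geq 0$ for every non-trivial divisor $d$ of $n$ and every $g \in G$; the criterion \eqref{MRSW} then yields that $u$ is rationally conjugate to an element of $G$, and since $\varepsilon_{g_0}(u) = 1$ that element must be $g_0$. I do not foresee a serious obstacle: the only bookkeeping that requires care is the treatment of the two exceptional classes $[0]$ and $[n/2]$ of $\Gamma_n$ (those fixed by the involution $j \mapsto -j$), where the map $\varepsilon_j \mapsto c_k$ carries an extra factor of $2$; this is precisely what makes the Fourier identity work uniformly and recover $\varepsilon_0 = \varepsilon_{n/2} = 0$ along with the other equalities.
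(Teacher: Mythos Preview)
Your argument is correct and is essentially the same strategy as in the paper (which simply refers to Lemma~4.1 of \cite{MargolisdelRioSerrano17}): invert the linear system $\lambda_i=\alpha_i$ via orthogonality of the characters of $\Z/n\Z$ to recover $\varepsilon_{[1]}=1$ and all other $\varepsilon_j=0$, and then use the inductive hypothesis on the proper powers $u^d$ together with \eqref{MRSW} to conclude rational conjugacy with $g_0$. The only cosmetic remark is that the hypothesis ``for every positive integer $i$'' already covers the residue class $i\equiv 0\bmod n$ (take $i=n$), so your Fourier identity on all of $\Z/n\Z$ is indeed available without any separate appeal to the augmentation.
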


By Lemma~\ref{LemmaLambdaAlpha}, in order to achieve our goal it is enough to prove that $\lambda_i=\alpha_i$ for every positive integer $i$.
We argue by contradiction, so we assume that $\lambda_d\ne \alpha_d$ for some
positive integer $d$ which we assume to be minimal with this property.
Observe that if $\lambda_i = \alpha_i$ and $j$ is an integer such that $\gcd(i, n) =
\gcd(j, n)$, then there exists $\sigma \in \Gal(\Q(\alpha_1)/\Q)$ such that $\sigma(\alpha_i) = \alpha_j$ and applying $\sigma$ to the
equation $\lambda_i = \alpha_i$ we obtain $\lambda_j = \alpha_j$.  	
This implies that $d$ divides $n$. Note that $\alpha_1 = \lambda_1$, by our choice of $g_0$, and hence $d\ne 1$.
Moreover, $d\ne n$ because $\lambda_n = 2 \sum_{x\in \Gamma_n} \varepsilon_x = 2 = \alpha_n$ as the augmentation of $u$ is 1.

We claim that
\begin{equation}\label{KeyEquality}
\lambda_d = \alpha_d + d\tau \text{ for some } \tau \in \Z[\alpha_1].
\end{equation}
Indeed, for any $x\in\Gamma_{n}$ let $B_x=\varepsilon_x -1$ if $x\sim_n 1$ and $B_x = \varepsilon_x$ otherwise. Then for any integer $i$ we have $\lambda_i - \alpha_i = \sum_{x\in\Gamma_n} B_x \Tr_{\Q(\zeta) / \Q(\alpha_1)} \left(\zeta^{ix}\right)$.
The claim then follows by applying  Corollary~3.3
of \cite{MargolisdelRioSerrano17} with $F=\Q(\alpha_1)$, $R=\Z[\alpha_1]$ and $\omega_i = \lambda_i -\alpha_i$.
Observe that in the notation of that corollary $\Gamma_n=\Gamma_F$.

By \eqref{ChiExpresion} we have
\begin{equation}\label{Chidg0u}
\chi_d(g_0)=\sum_{\substack{i=0\\i\equiv d\bmod 2}}^d \alpha_i \quad \text{and} \quad
\chi_d(u) =  \sum_{x\in \Gamma_n} \varepsilon_x \chi_d(g_0^x) = \sum_{x\in \Gamma_n} \varepsilon_x \sum_{\substack{i=0\\i\equiv d \bmod 2}}^d\alpha_{ix} = \sum_{\substack{i=0\\i\equiv d\bmod 2}}^d \lambda_i.
\end{equation}
Combining this with \eqref{KeyEquality} and the minimality of $d$,  we deduce that $\chi_d(u)=\chi_d(g_0)+d\tau$.
Furthermore, $\tau\ne 0$, as $\lambda_d\ne \alpha_d$.
Therefore
\begin{equation}\label{dmu}
C_b(\chi_d(u)) \equiv  C_b(\chi_d(g_0)) \bmod d \quad \text{ for every } b\in \B
\end{equation}
and
\begin{equation}\label{Difference}
d\leq \left|C_{b_0}(\chi_d(u)) - C_{b_0}(\chi_d(g_0))\right| \quad \text{ for some }b_0\in \B.
\end{equation}

The bulk of our argument relies on an analysis of the eigenvalues of $\Theta_d(u)$ and the induction hypothesis on $n$ and $d$. More precisely, we will use \eqref{dmu} and \eqref{Difference} to obtain a contradiction by comparing the eigenvalues of $\Theta_d(g_0)$ and $\Theta_d(u)$.
Of course, we do not know the eigenvalues of the latter.
However we know the eigenvalues of $\Theta_d(u^c)$ for every $c\mid n$ with $c\ne 1$, because we know the eigenvalues of $\Theta_d(g_0)$ and $u^c$ is conjugate to $g_0^c$.
This provides information on the eigenvalues of $\Theta_d(u)$.
For example, recall that if $\xi$ is an eigenvalue of $\Theta_d(u)$ then $\xi$ and $\xi^{-1}$ have the same multiplicity as eigenvalues of $\Theta_d(u)$. Therefore, if $3\le h$ then the sum of the multiplicities of the eigenvalues of $\Theta_d(u)$ of order $h$ is even.
Moreover, for every $t$-regular element $g$ of $G$, the multiplicity of $1$ as eigenvalue of $\Theta_d(g)$ is congruent  modulo $2$ with the degree $d+1$ of $\chi_d$.
As $n$ is not a prime power there is an odd prime $p$ dividing $n$.
By the induction hypothesis $\Theta_d(u^p)$ is rationally conjugate to $\Theta_d(g_0^p)$.
Thus the multiplicity of $-1$ as eigenvalue of $\Theta_d(u^p)$ is even.
As the latter is the sum of the multiplicities as eigenvalues of $\Theta_d(u)$ of $-1$ and the elements of order $2p$, we deduce that the multiplicity of $-1$ as eigenvalue of $\Theta_d(u)$ is even.
Using this we can see that $\Theta_d(u)$ is conjugate to $\diag(\zeta^{\nu_{-d}},\zeta^{\nu_{2-d}},\dots, \zeta^{\nu_{d-2}}, \zeta^{\nu_d})$ for integers $\nu_{-d},\nu_{-d+2},\dots,\nu_{d-2},\nu_{d}$ such that $\nu_{-i}=-\nu_i$ for every $i$.
Let $X_d=\{i : 1\le i \le d, i\equiv d \bmod 2\}$.
Then, by \eqref{Chidg0u} and Proposition~\ref{BaseLemma}, we have for every $b\in \B$ that
\begin{equation}\label{DiferenciaCb}
\nonumber C_b(\chi_d(u))-C_b(\chi_d(g_0))=
\sum_{i\in X_d} \left(\kappa_{\nu_i}\cdot \beta_{b,\nu_i} \cdot \mu(\gamma(\nu_i))\cdot \delta_{b,\nu_i}^{(n/\bar{\gamma}(\nu_i))} -
\kappa_{i}\cdot \beta_{b,i} \cdot \mu(\gamma(i))\cdot \delta_{b,i}^{(n/\bar{\gamma}(i))}\right).
\end{equation}
Moreover, if $c\mid n$ with $c\ne 1$ then the lists $(c\nu_i)_{i\in X_d}$ and $(ci)_{i\in X_d}$ represent the same elements in $\Gamma_n$, up to ordering, and hence
$(\nu_i)_{i\in X_d}$ and $(i)_{i\in X_d}$ represent the same elements of $\Gamma_{\frac{n}{c}}$, up to ordering.
We express this by writing $(\nu(X_d))\sim_{\frac{n}{c}} (X_d)$.
This provides restrictions on $d$, $n$ and the $\nu_i$.

The following two lemmas are variants of Lemmas~4.2 and 4.3 of \cite{MargolisdelRioSerrano17}.

\begin{lemma}\label{KappaAndPrime}
\begin{enumerate}
\item\label{kappa1} Let $i\in X_d$. If $\kappa_i\ne 1$ then $n=2d$ and $i=d$.
If $\kappa_{\nu_i}\ne 1$ then $\frac{n}{d}$ is the smallest prime dividing $n$ and $\kappa_{\nu_j}=1$ for every $j\in X_d\setminus \{i\}$.
\item\label{PrimeGreaterThand} If $d> 2$ then $n$ is not divisible by any prime greater than $d$.
In particular, if $d$ is prime then $\kappa_{\nu_i}=1$ for every $i\in X_d$.
\end{enumerate}
\end{lemma}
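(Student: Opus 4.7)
The plan is to use the multiset equalities $(\nu(X_d)) \sim_{n/c} (X_d)$ (valid for every $c \mid n$, $c \neq 1$) together with elementary counting arguments based on the definition of $\kappa$.

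For part (1), the condition $\kappa_i \neq 1$ means, by definition, that $i \equiv 0$ or $n/2 \pmod{n}$; since $i \in X_d$ gives $1 \leq i \leq d$ and $d \leq n/2$ (as $d$ is a proper divisor of $n$), the only option is $i = n/2 = d$, so $n = 2d$. Similarly $\kappa_{\nu_i} \neq 1$ is equivalent to $\nu_i$ representing the zero class of $\Gamma_{n/2}$. Applying the multiset equality with $c = 2$, the number of such $i \in X_d$ equals the number of $j \in X_d$ with $j \equiv 0 \pmod{n/2}$, which by the preceding sentence is $0$ unless $n = 2d$ (in which case it equals $1$, uniquely attained at $j = d$). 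Hence existence of some $\kappa_{\nu_i} \neq 1$ forces $n = 2d$ and uniqueness of the corresponding $i \in X_d$; since $4 \mid n = 2d$ makes $d$ even, $2 = n/d$ is then the smallest prime dividing $n$.

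The \emph{in particular} clause in part (2) follows directly from part (1): if $d$ is a prime greater than $2$, then $d$ is odd, so $n = 2d$ would give $4 \nmid n$, contradicting the standing hypothesis $4 \mid n$.

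For the main claim of part (2), I argue by contradiction, assuming a prime $p > d$ divides $n$. Taking $c = n/p$ in the multiset equality gives $(\nu(X_d)) \sim_p (X_d)$. Because $p > d$ and $p$ is odd (as $d \geq 3$ forces $p \geq 5$), the elements of $X_d$ are pairwise distinct in $\Gamma_p$ --- an identification $i \equiv -j \pmod p$ would force $i + j = p$, impossible since $i + j$ is even. Hence the $\nu_i \pmod p$ ($i \in X_d$) form a signed permutation of the $i \pmod p$. I expect the main obstacle to be combining this local constraint with the other multiset equalities $(\nu(X_d)) \sim_{n/c} (X_d)$, with the relations $\chi_m(u) = \chi_m(g_0)$ for $m < d$ that follow from the minimality of $d$ (which yields $\lambda_m = \alpha_m$ for every $m < d$), and with the coefficient bounds \eqref{dmu}--\eqref{Difference}, in order to deduce $\lambda_d = \alpha_d$ in contradiction to the choice of $d$. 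I would follow the strategy of Lemma~4.2 of \cite{MargolisdelRioSerrano17}, adapted to the extra subtleties arising when $n$ is even.
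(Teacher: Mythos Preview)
Your treatment of part~(1) is correct and essentially identical to the paper's: the paper compares multisets modulo $n/p$ for $p$ the smallest prime dividing $n$, but since $4\mid n$ throughout this section, $p=2$, so your choice $c=2$ is the same. Your derivation of the ``in particular'' clause from part~(1) and the hypothesis $4\mid n$ is also fine.

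The main assertion of part~(2), however, is not proved. You correctly establish that the elements of $X_d$ (and hence, via $(\nu(X_d))\sim_q(X_d)$, also the $\nu_i$) are pairwise inequivalent in $\Gamma_q$ for a prime $q>d$ dividing $n$, but you then stop and say you ``would follow the strategy of Lemma~4.2 of \cite{MargolisdelRioSerrano17}''. The missing step is short and uses tools you already have on the table, so there is no real obstacle and no need to invoke the earlier paper. First, the existence of a prime $q>d$ dividing $n$ rules out $n=2d$ (otherwise $q\mid 2d$ with $q>d\ge 3$), so part~(1) gives $\kappa_i=\kappa_{\nu_i}=1$ for every $i\in X_d$. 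Second, since $q\nmid i$ for all $i\in X_d$, one checks that $q\mid n/\bar\gamma(i)$; thus $\delta_{b,i}^{(n/\bar\gamma(i))}=1$ forces $b\sim_q i$, and your injectivity in $\Gamma_q$ shows this can happen for at most one $i\in X_d$. The same applies to the $\nu_i$. Substituting into \eqref{DiferenciaCb} yields $\left|C_b(\chi_d(u))-C_b(\chi_d(g_0))\right|\le 2<d$ for every $b\in\B$, contradicting \eqref{Difference}. Note that the contradiction is with \eqref{Difference}, not with $\lambda_d\ne\alpha_d$ directly; the ingredients you list (the equalities $\chi_m(u)=\chi_m(g_0)$ for $m<d$, the other multiset congruences) are not needed here.
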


\begin{proof}
Let $p$ denote the smallest prime dividing $n$.

\eqref{kappa1} The first statement is clear. Suppose that $\kappa_{\nu_i}\ne 1$. Then either $p=2$ and $\nu_i\equiv 0\bmod \frac{n}{2}$ or $\nu_i\equiv 0 \bmod n$.
As $(X_d)\sim_{\frac{n}{p}} (\nu(X_d))$ we deduce that $k\equiv 0 \bmod \frac{n}{p}$ for some $k\in X_d$. Therefore $d=k=\frac{n}{p}$ and for every $j\in X_d\setminus \{i\}$ we have $\nu_j\not\equiv 0 \bmod \frac{n}{p}$. Thus $\kappa_{\nu_j}=1$.

\eqref{PrimeGreaterThand} Suppose that $q$ is a prime divisor of $n$ with $d<q$.
Then $\frac{n}{d}\ne p$ and therefore, by \eqref{kappa1}, $\kappa_i=\kappa_{\nu_i}=1$ for every $i\in X_d$.
Thus, by \eqref{Difference} and \eqref{DiferenciaCb}, it is enough to show that $\delta^{(n/\bar{\gamma}(i))}_{b,i}\ne 0$ for at most one $i\in X_d$ and
$\delta^{(n/\bar{\gamma}(\nu_i))}_{b,\nu_i}\ne 0$ for at most one $i\in X_d$.
Observe that if $i\in X_d$ then $q\nmid i$ and hence $\frac{n}{\bar{\gamma}(i)}$ is multiple of $q$.
Moreover, if $i$ and $j$ are different elements of $X_d$ then $i$ and $j$ have the same parity and $-q<i-j<i+j<2q$. Therefore $i\not\sim_q j$.
Thus either $\delta^{\frac{n}{\bar{\gamma}(i)}}_{b,i}=0$ or $\delta^{\frac{n}{\bar{\gamma}(j)}}_{b,j}=0$.
As $(X_d)\sim_q (\nu(X_d))$, this also proves that $\delta^{\frac{n}{\bar{\gamma}(\nu_i)}}_{b,\nu_i}=0$ or $\delta^{\frac{n}{\bar{\gamma}(\nu_j)}}_{b,\nu_j}=0$.
\end{proof}

We obtain an upper bound for $\left|C_b(\chi_d(u)) - C_b(\chi_d(g_0))\right|$ in terms of the number of prime divisors $P(d)$ of $d$.

\begin{lemma}\label{BoundDifference}
For every $b\in\B$ we have
$\left|C_b(\chi_d(u)) - C_b(\chi_d(g_0))\right| \le 2 + 2^{P(d)+1}$.
\end{lemma}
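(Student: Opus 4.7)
My plan is to apply the triangle inequality to the displayed formula for $C_b(\chi_d(u))-C_b(\chi_d(g_0))$ obtained just above the lemma, splitting the right hand side into the $\nu$-part and the $i$-part, and to bound each of the two sums by $1+2^{P(d)}$.

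Since $\gamma(x)$ is squarefree by the definition of $\gamma$, one has $|\mu(\gamma(x))|=1$; together with $|\beta_{b,x}|=1$ and $\delta_{b,x}^{(n/\bar\gamma(x))}\in\{0,1\}$, every summand has absolute value $\kappa_x\cdot\delta_{b,x}^{(n/\bar\gamma(x))}\in\{0,1,2\}$. Each sum is therefore bounded by the number of indices $i\in X_d$ for which the corresponding $\delta$-factor is non-zero, plus at most one extra unit coming from a single occurrence of $\kappa=2$: by Lemma~\ref{KappaAndPrime}\eqref{kappa1}, $\kappa_i=2$ forces $n=2d$ and $i=d$, and is thus unique in $X_d$; the same lemma yields that $\kappa_{\nu_i}=2$ likewise happens for at most one $i\in X_d$.

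The crux is therefore the counting estimate
\[
\bigl|\{i\in X_d:\delta_{b,i}^{(n/\bar\gamma(i))}\ne 0\}\bigr|\le 2^{P(d)}
\]
and its $\nu_i$-analogue. For the $i$-version I would stratify by $e=\bar\gamma(i)$, a squarefree divisor of $\PP n$: the congruence $b\equiv\pm i\pmod{n/e}$ combined with $1\le i\le d$ and with the defining inequalities for $\bar\gamma(i)=e$ (i.e.\ $|i|_{n_p}\le n_p/(2p)$ for $p\mid e$, the strict reverse inequalities for $p\in P(n)\setminus P(e)$, and the boundary adjustment at $p=2$) should pin $i$ down essentially uniquely in terms of the subset $P(e)\cap P(d)\subseteq P(d)$, yielding the $2^{P(d)}$ bound. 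The $\nu$-version is reduced to the $i$-version by means of the relations $(\nu(X_d))\sim_{n/c}(X_d)$ for every prime $c\mid n$, which follow from the induction hypothesis on $n$ applied to $u^c$; these let one transfer the counting for the $\nu_i$ back to that for $i\in X_d$, one prime of $n$ at a time.

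Adding the two sum-bounds then gives $(1+2^{P(d)})+(1+2^{P(d)})=2+2^{P(d)+1}$, as claimed. The main obstacle is the counting estimate: since $|X_d|=\lceil d/2\rceil$ can exceed $2^{P(d)}$ substantially (for instance when $d$ is a prime power), the bound depends on a delicate interplay between the congruence $b\equiv\pm i\pmod{n/\bar\gamma(i)}$ and the $\bar\gamma$-thresholds that determine which $i$'s can be congruent to $\pm b$ modulo $n/e$. The boundary case $|i|_{n_2}=n_2/4$, where $\bar\gamma$ differs from $\gamma$ and the sign factor $\beta_{b,i}$ can flip, is the principal source of technical complication in making the argument precise.
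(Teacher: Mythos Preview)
Your overall strategy coincides with the paper's: split via the triangle inequality, absorb the single possible $\kappa=2$ via Lemma~\ref{KappaAndPrime}\eqref{kappa1}, and bound the number of indices with $\delta\ne 0$ by $2^{P(d)}$ through a stratification. The paper's execution of the counting, however, is tighter and avoids the two places where your sketch remains vague. First, the paper stratifies by $e=\gcd(d,\bar\gamma(\nu_i))$, which is already a divisor of $\PP d$ and so gives exactly $2^{P(d)}$ strata; your stratification by $\bar\gamma(\nu_i)$ (a divisor of $\PP n$) forces the extra step of arguing that only $P(e)\cap P(d)$ matters, which is the same thing said less directly. Second, and more importantly, the paper does \emph{not} reduce the $\nu$-case to the $i$-case ``one prime at a time''; it proves the $\nu$-case directly and obtains the $i$-case as the specialization $u=g_0$. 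The argument is: after adjusting signs so that $b\equiv \nu_i\bmod n/\bar\gamma(\nu_i)$ whenever $\delta_{b,\nu_i}^{(n/\bar\gamma(\nu_i))}=1$, one checks prime by prime of $d$ that any two $\nu_i,\nu_j$ in the same stratum $Y_e$ satisfy $\nu_i\equiv \nu_j\bmod d_p$ (the nontrivial case $n_p=d_p$, $p\mid e$ uses Lemma~\ref{Elementary}\eqref{LevelStep}); hence $\nu_i\equiv\nu_j\bmod d$, and since $(\nu(X_d))\sim_d(X_d)$ (take $c=n/d$ in the induction hypothesis) and the elements of $X_d$ are pairwise $\not\sim_d$, one gets $|Y_e|\le 1$. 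Your proposed per-prime transfer is not obviously wrong, but it is unnecessary and you have not said how to make it work; the direct route via $(\nu(X_d))\sim_d(X_d)$ is both shorter and sharper. Finally, the sign $\beta_{b,i}$ is not a source of difficulty here: it only affects the sign of each summand, not whether it vanishes, so it is invisible in the counting; the only sign issue is the $\pm$ inside $\delta$, and that is dispatched by the sign normalization of the $\nu_i$'s.
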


\begin{proof}
Using \eqref{DiferenciaCb} it is enough to prove that $\sum_{i\in X_d} \kappa_i \delta_{b,i}^{(n/\bar{\gamma}(i))}\le 1 + 2^{P(d)}$ and $\sum_{i\in X_d} \kappa_{\nu_i} \delta_{b,\nu_i}^{(n/\bar{\gamma}(\nu_i))}\le 1 + 2^{P(d)}$.
This is a consequence of Lemma~\ref{KappaAndPrime}.\eqref{kappa1} and the following inequalities for every $e$ dividing $\PP{d}$:
$$\left| \left\{ i\in X_d : \gcd(d,\bar{\gamma}(i))=e, \delta_{b,i}^{(n / \bar{\gamma}(i))}=1 \right\} \right|  \leq  1, \quad
\left| \left\{ i\in X_d : \gcd(d,\bar{\gamma}(\nu_i))=e,
\delta_{b,\nu_i}^{(n / \bar{\gamma}(\nu_i))}=1 \right\} \right| \leq  1.
$$
We prove the second inequality, only  using that $(\nu(X_d))\sim_d (X_d)$.
This implies the first inequality by applying the second one to $u=g_0$.

Let $Y_e = \left\{ i\in X_d : \gcd(d,\bar{\gamma}(\nu_i))=e, \delta_{b,\nu_i}^{(n / \bar{\gamma}(\nu_i))}=1 \right\}$.
By changing the sign of some $\nu_i$'s, we may assume without loss of generality that if
$\delta_{b,\nu_i}^{(n/\bar{\gamma}(\nu_i))}=1$ then $b\equiv \nu_i \bmod \frac{n}{\bar{\gamma}(\nu_i)}$.
Thus, if $i\in Y_e$ then $b\equiv \nu_i \bmod \frac{n}{\bar{\gamma}(\nu_i)}$.
We claim that if $i,j\in Y_e$ then $\nu_i\equiv \nu_j \bmod d$.
Indeed, let $p$ be prime divisor of $d$.
If $n_p\ne d_p$ or $p\nmid e$ then clearly $\nu_i\equiv \nu_j \bmod d_p$.
Otherwise, i.e. $n_p=d_p$ and $p\mid e$, then $p$ divides both $\bar{\gamma}(\nu_i)$ and $\bar{\gamma}(\nu_j)$ and $\nu_i \equiv \nu_j \bmod \frac{d_p}{p}$.
Therefore, by Lemma~\ref{Elementary}.\eqref{LevelStep}, $\nu_i\equiv \nu_j \bmod n_p$, as desired.
As $(\nu(X_d))\sim_d (X_d)$ and the elements of $X_d$ represent different classes in $\Gamma_d$ we deduce that $|Y_e|\le 1$. This finishes the proof of the lemma.
\end{proof}

We are ready to finish the proof of Theorem~\ref{main}. Recall that we are arguing by contradiction.

By \eqref{Difference} and Lemma~\ref{BoundDifference} we have $d\le 2+2^{P(d)+1}$ and, using this, it is easy to show that $d\le 6$ or $d=10$. Indeed, if $P(d)\ge 3$ then $2+2^{P(d)+1}\ge d \ge 2\cdot 3 \cdot 5 \cdot 2^{P(d)-3}>14+2^{P(d)+1}$, a contradiction.
Thus $P(d)=2$ and $d\le 10$ or $P(d)=1$ and $d\le 5$.
Hence $d$ is either $2,3,4,5,6$ or $10$.
We deal with these cases separately.

\underline{Suppose that $d=2$.}
Then $\nu_2\sim_{n_p} 2$ for every prime $p$. By the assumptions on $n$ and Lemma~\ref{KappaAndPrime}.(\ref{kappa1}), this implies that $\kappa_2=\kappa_{\nu_2}=1$, $\gamma(2)=\gamma(\nu_2)$ and $\beta_{b_0,2}=\beta_{b_0,\nu_2}$.
Therefore $\left|C_{b_0}(\chi_2(u)) - C_{b_0}(\chi_2(g_0))\right|=
\left| \mu(\gamma(2))\left(\delta_{b_0,2}^{(n/\bar{\gamma}(2))}-\delta_{b_0,\nu_2}^{(n/\bar{\gamma}(\nu_2))}\right)
	\right|\le 1$, contradicting \eqref{Difference}.

\underline{Suppose that $d=3$}.
By Lemma~\ref{KappaAndPrime} and the assumptions on $n$, we have $\kappa_i=\kappa_{\nu_i}=1$ for every $i\in X_3$ and $\PP{n}= 6$.
If $2^4\mid n$ or $3^2\mid n$ then $\left| \left\{ i=1,3 : \delta_{b,i}^{(n/\bar{\gamma}(i))}=1 \right\} \right| \leq 1$ and $\left| \left\{ i=1,3 : \delta_{b,\nu_i}^{(n/\bar{\gamma}(\nu_i))}=1 \right\} \right| \leq 1$, which implies $\left|C_{b_0}(\chi_3(u)) - C_{b_0}(\chi_3(g_0))\right|\le 2$, contradicting (\ref{Difference}).
Thus $n=24$, since $n$ is neither $12$ nor  a prime power and it is multiple of $4$.
In this case we have $\bar{\gamma}(1)=\gamma(1)=2$, $\bar{\gamma}(3)=\gamma(3)=3$, $\beta_{b,1}=\beta_{b,3}=1$ and  $C_b(\chi_3(g_0))=-\delta_{b,1}^{(12)} -\delta_{b,3}^{(8)}$ for every $b\in\B$.
We may assume that $3\mid \nu_3$ and $3\nmid \nu_1$ because $(\nu(X_3))\sim_3 (X_3)$.
Suppose that $\nu_3 \sim_8 3$ and $\nu_1 \sim_8 1$. Then $\bar{\gamma}(\nu_{1})=\gamma(\nu_1)=2$, $\bar{\gamma}(\nu_3)=\gamma(\nu_3)=3$, $\beta_{b_0,\nu_3}=1$ and $\delta_{b_0,3}^{(8)} = \delta_{b_0,\nu_3}^{(8)}$, which implies $\left|C_{b_0}(\chi_3(u)) - C_{b_0}(\chi_3(g_0))\right| \leq 2$, contradicting (\ref{Difference}).
Suppose now that  $\nu_3 \sim_8 1$ and $\nu_1 \sim_8 3$. This implies that $\nu_1 \equiv \pm 3 \bmod 8$ and $\nu_1 \equiv \pm 1 \bmod 3$ (because $3\mid \nu_3$ but $3\nmid \nu_1$). Thus either $\nu_1 \equiv \pm 11\bmod 24$ or $\nu_1\equiv \pm 5 \bmod 24$.
As $(\nu(X_3))\sim_{12}(X_3)$, we deduce that the only possibility is $\nu_1 \equiv \pm 11 \bmod 24$.
In this case we have $\bar{\gamma}(\nu_1)=\gamma(\nu_1)=1$ and $\bar{\gamma}(\nu_3)=\gamma(\nu_3)=6$.
Hence $C_{11}(\chi_3(u))-C_{11}(\chi_3(g_0))=\delta_{11,\nu_{1}}^{(24)} + \delta_{11,\nu_3}^{(4)} + \delta_{11,1}^{(12)} + \delta_{11,3}^{(8)} = 4$,   contradicting (\ref{dmu}).

\underline{Suppose that $d=4$.}
By Lemma~\ref{KappaAndPrime} and the assumptions on $n$, we have $\kappa_i=\kappa_{\nu_i}=1$ for every $i\in X_4$ and $\PP{n}=6$.
If $3^3\mid n$ or $2^3\mid n$ then $\left| \left\{ i=2,4 : \delta_{b,i}^{(n/\bar{\gamma}(i))}=1 \right\} \right| \leq 1$ which implies
$\left|C_{b_0}(\chi_4(u)) - C_{b_0}(\chi_4(g_0))\right|\le 3$, contradicting (\ref{Difference}).
Thus  $n=36$.
In this case we have $\gamma(2)=1=\beta_{b_0,2}=\beta_{b_0,4}$ and $\gamma(4)=2$, which
implies $|C_{b_0}(\chi_4(g_0))|\le 1$ and hence $\left|C_{b_0}(\chi_4(u)) - C_{b_0}(\chi_4(g_0))\right|\le 3$, contradicting again  (\ref{Difference}).

\underline{Suppose that $d=5$}.
Since $(\nu(X_5)) \sim_5 (X_5)$, there is exactly one $\nu_i$ which is divisible by $5$, say $\nu_5$.
In particular, for $i\ne 5$ we have $5\mid \frac{n}{\bar{\gamma}(\nu_i)}$ and $5\mid \frac{n}{\bar{\gamma}(i)}$.
Moreover, if $j$ is an integer not multiple of $5$ then $|\{i=1,3 : \nu_i \sim_5 j\}|\leq 1$. This implies that
$\left| \left\{ i=1,3 : \delta_{b,i}^{(n/\bar{\gamma}(i))}=1 \right\} \right| \leq 1$ and $\left| \left\{ i=1,3 : \delta_{b,\nu_i}^{(n/\bar{\gamma}(\nu_i))}=1 \right\} \right| \leq 1$.
On the other hand, as $n\ne 10$, we deduce that $\kappa_i=1$ for every $i\in X_5$, by Lemma~\ref{KappaAndPrime}.(\ref{kappa1}).
Therefore, using (\ref{Difference}) and (\ref{DiferenciaCb}), we deduce that  $\kappa_{\nu_5}=2$, in contradiction with Lemma~\ref{KappaAndPrime}.\eqref{kappa1}.
	
\underline{Suppose that $d=6$}.
By Lemma~\ref{KappaAndPrime}, we have $\PP{n}\mid 30$ and $\kappa_i=\kappa_{\nu_i}=1$ for every $i\in X_6$ because $n\ne 12$.
If $25\mid n$, or $9\mid n$ or $8\mid n$ then we have
$\left|\left\{i=2,4,6 : \delta_{b,i}^{(n/\bar{\gamma}(i))}=1\right\}\right|\leq 2$ and
$\left|\left\{i=2,4,6 : \delta_{b,\nu_i}^{(n/\bar{\gamma}(\nu_i))}=1\right\}\right|\leq 2$.
This implies that $\left|C_{b_0}(\chi_6(u)) - C_{b_0}(\chi_6(g_0))\right| \leq 4$, yielding a contradiction with (\ref{Difference}).
Therefore  $n=60$ and hence $\beta_{b,2}=\beta_{b,4}=\beta_{b,6}=1$, $\bar{\gamma}(2)=1$, $\bar{\gamma}(4)=\gamma(4)=2$ and $\bar{\gamma}(6)=\gamma(6)=3$.
This implies that $|C_{b_0}(\chi_6(g_0))|\le 2$ and hence $\left|C_{b_0}(\chi_6(u)) - C_{b_0}(\chi_6(g_0))\right|\leq 5$, yielding a contradiction with (\ref{Difference}).
	
\underline{Suppose that $d=10$}.
If $5\nmid \frac{n}{\bar{\gamma}(i)}$ for some $i\in X_{10}$ then $n_5=(\bar{\gamma}(i))_5=5$ and hence $5\mid i$. The same also holds for $\nu_i$.
Therefore, if $5\nmid i$ then $5\mid \frac{n}{\bar{\gamma}(i)}$ and if $5\nmid \nu_i$ then $5\mid \frac{n}{\bar{\gamma}(\nu_i)}$.
Thus $\left|\left\{i\in X_{10}: 5\nmid i, \delta_{b,i}^{(n/\bar{\gamma}(i))}=1\right\}\right|\leq 2$ and
$\left|\left\{i\in X_{10} : 5\nmid \nu_i,\delta_{b,\nu_i}^{(n/\bar{\gamma}(\nu_i))}=1\right\}\right|\leq 2$. 	 This implies that $\left|C_{b_0}(\chi_{10}(u)) - C_{b_0}(\chi_{10}(g_0))\right|\leq 8$, contradicting (\ref{Difference}).
	
\bibliographystyle{alpha}
\bibliography{SL}
\end{document}